\documentclass[amstex,11pt,reqno]{amsart}

\usepackage[a4paper,margin=30mm]{geometry}
\usepackage[T1]{fontenc}
\usepackage[utf8]{inputenc}
\usepackage{lmodern}
\usepackage{amsmath,amssymb,amsthm,mathtools,mathrsfs,mathdots}
\usepackage{bm}
\usepackage{enumitem}
\usepackage{tikz}
\usetikzlibrary{calc,positioning,arrows.meta}
\usetikzlibrary{backgrounds, fit}
\usetikzlibrary{shapes.geometric}
\usepackage{graphicx}
\usepackage[colorlinks=true,linkcolor=blue,citecolor=blue,urlcolor=blue]{hyperref}
\usepackage[nameinlink,noabbrev]{cleveref}

\newtheorem{theorem}{Theorem}[section]
\newtheorem{proposition}[theorem]{Proposition}
\newtheorem{lemma}[theorem]{Lemma}
\newtheorem{corollary}[theorem]{Corollary}
\theoremstyle{definition}
\newtheorem{definition}[theorem]{Definition}
\newtheorem{remark}[theorem]{Remark}
\newtheorem{example}[theorem]{Example}

\crefname{theorem}{Theorem}{Theorems}
\Crefname{theorem}{Theorem}{Theorems}
\crefname{proposition}{Proposition}{Propositions}
\Crefname{proposition}{Proposition}{Propositions}
\crefname{lemma}{Lemma}{Lemmas}
\Crefname{lemma}{Lemma}{Lemmas}
\crefname{corollary}{Corollary}{Corollaries}
\Crefname{corollary}{Corollary}{Corollaries}
\crefname{definition}{Definition}{Definitions}
\Crefname{definition}{Definition}{Definitions}
\crefname{remark}{Remark}{Remarks}
\Crefname{remark}{Remark}{Remarks}

\newcommand{\Z}{\mathbb Z}
\newcommand{\cF}{\mathcal F}
\newcommand{\wt}{\operatorname{wt}}
\newcommand{\ev}{\operatorname{ev}_1}
\newcommand{\bx}{\bm{x}}
\newcommand{\by}{\bm{y}}
\newcommand{\ba}{\bm{a}}
\newcommand{\cR}{\mathscr R}
\newcommand{\cL}{\mathscr L}

\title[Conway--Coxeter theorem for decorated frieze]{A Conway--Coxeter theorem \\for decorated frieze patterns}
\author[T. Kuwana]{Takeru Kuwana}
\address{Department of Mathematics, School of Education, Waseda University, \endgraf 1-6-1 Nish-Waseda, Shinjuku, Tokyo 169-8050, Japan}
\email{kuwanatakeru0521@gmail.com}
\author[K. Matsuzaki]{Katsuhiko Matsuzaki}
\address{Department of Mathematics, School of Education, Waseda University, \endgraf 1-6-1 Nish-Waseda, Shinjuku, Tokyo 169-8050, Japan}
\email{matsuzak@waseda.jp}
\subjclass[2020]{Primary 05E99; Secondary 13F60, 51M20}
\keywords{frieze pattern, Conway--Coxeter theorem, triangulation, Laurent positivity, Ptolemy relation, quiddity sequence, periodicity, glide-reflection symmetry}

\begin{document}

\begin{abstract}
Let $m\geq0$ and put $n=m+3$. We introduce a normalized positive Laurent class of decorated frieze patterns and prove a decorated analogue of the Conway--Coxeter classification theorem. Namely, such friezes of width $m$ are in canonical bijection with weighted triangulations of a convex $n$-gon, whose boundary edges are labeled by independent variables $y_1,\ldots,y_n$ and whose diagonals are labeled by $x_1,\ldots,x_m$. While a weighted Conway--Coxeter propagation algorithm constructs the frieze from a weighted triangulation, a main new ingredient is an explicit nonrecursive Laurent formula expressing each quiddity entry directly from the weighted triangles incident to the corresponding vertex. Conversely, specialization to $1$, Laurent positivity, and a decorated cutting-and-gluing procedure recover the weighted triangulation from the frieze. 
%We also prove that closed decorated friezes are automatically $n$-periodic and satisfy glide-reflection symmetry, without assuming positivity or periodicity.
\end{abstract}

\maketitle

\section{Introduction}\label{sec:intro}

Frieze patterns were introduced by Coxeter \cite{Coxeter1971} and studied in detail by Conway and Coxeter \cite{CC1,CC2}. A classical positive integral closed frieze is an array of positive integers bounded by rows of $1$'s and satisfying the unimodular rule
\[
 ad-bc=1
\]
in every elementary diamond. Two fundamental properties of the classical theory are often separated as follows. In the terminology used by Morier-Genoud \cite{Morier2Friezes}, if a closed frieze has width $m=n-3$, then
\begin{enumerate}[label=\textup{(CC\arabic*)},leftmargin=3.7em]
\item every row is $n$-periodic;
\item positive integral closed friezes are in one-to-one correspondence with triangulations of a convex $n$-gon, and the first nontrivial row records the numbers of incident triangles at the vertices.
\end{enumerate}
The second statement is the celebrated Conway--Coxeter classification theorem and will be the main point of comparison for the present paper. The first can also be proved directly from the second-order recurrence satisfied by the diagonals; see, for example, Henry \cite{Henry2013}. Classical closed friezes possess a further characteristic symmetry, namely glide-reflection symmetry. It can be read geometrically from the Broline--Crowe--Isaacs interpretation of frieze entries \cite{BCI}, but it is also intrinsic to the algebraic frieze relations. Here and throughout the paper, saying that a frieze is $n$-periodic means that $n$ is a period; it does not mean that $n$ is necessarily the least positive period.

The geometry of the classical correspondence was developed further by Broline, Crowe and Isaacs \cite{BCI}, who interpreted all frieze entries in terms of a triangulated polygon. The modern theory is closely related to cluster algebras, Laurent positivity, and perfect matchings. Propp \cite{Propp2020} described a perfect-matching model for type $A$ friezes and the associated Laurent polynomials. Frieze patterns with coefficients were studied systematically by Cuntz, Holm and J\o rgensen \cite{CHJ2020}; in that setting boundary entries vary and local determinant identities become Ptolemy-type relations with coefficients. Cuntz and Holm \cite{CH2021} later characterized numerical coefficient friezes arising from subpolygons of Conway--Coxeter friezes. The coefficient theory has also been extended to the noncommutative setting, including propagation, quiddity cycles, and reduction formulae \cite{CHJ2024}. Further geometric models include Farey triangulations and Farey complexes \cite{MOT2015,Short2025}.

Of particular relevance to the present paper is Nishiyama's treatment of weighted triangulations and decorated friezes \cite[Chapters~10--11]{Nishiyama2022}. His decorated version of the Conway--Coxeter algorithm starts with a triangulated polygon carrying weights on its boundary edges and diagonals and propagates these data through adjacent triangles. In this way the entries of the associated decorated frieze can be calculated successively. Thus there is already a natural construction in the direction
\[
 \text{weighted triangulation}
 \longrightarrow
 \text{decorated frieze}.
\]
One purpose of the present work is to complement this propagation procedure by a direct description of the first nontrivial (quiddity) row. We obtain an explicit formula for each decorated quiddity entry in terms of the weighted triangles incident to a single vertex. This formula is nonrecursive and may be viewed as the weighted counterpart of the classical rule saying that a quiddity entry is the number of triangles incident to the corresponding vertex.

We now describe our setting. Fix $m\geq0$ and set $n=m+3$. Let
\[
 \bx=(x_1,\ldots,x_m),\qquad
 \by=(y_1,\ldots,y_n)
\]
be algebraically independent variables, with the convention $y_{i+n}=y_i$. A closed decorated frieze has the same strip shape as a classical closed frieze, but the two nonzero boundary rows are decorated by the periodic sequence of $\by$-variables, and the local rule becomes
\begin{equation}\label{eq:intro-rule}
 W_{i,j}W_{i+1,j+1}-W_{i+1,j}W_{i,j+1}=y_i y_j.
\end{equation}
Periodicity is not included in the definition, but every interior entry is assumed to be nonzero.

Before turning to the classification theorem, we record an algebraic fact about closed decorated friezes. It is the analogue of (CC1).

\begin{theorem}[Decorated periodicity -- (DCC1)]\label{thm:intro-periodicity}
Every closed decorated frieze of width $m$ is $(m+3)$-periodic. More precisely, if $n=m+3$, then
\[
 W_{i+n,j+n}=W_{i,j}
\]
for all entries of the strip.

If the boundary decorations $y_1,\ldots,y_n$ are algebraically independent variables, then $n$ is the least positive period of the formal decorated frieze.
\end{theorem}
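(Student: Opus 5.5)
The plan is to copy the classical route via the linear recurrence along diagonals (Henry's argument) and adapt it to the decorated rule \eqref{eq:intro-rule}. I index the strip so that the two zero rows are $W_{i,i}=0$ and $W_{i,i+n}=0$. The decorated boundary rows are $W_{i,i+1}=y_i$ and $W_{i,i+n-1}=y_{i+n-1}$; the second is the normalization forced by the rule in the last diamond, and I will fix it consistently with the paper's conventions. Every interior entry $W_{i,j}$ with $i+1<j<i+n-1$ is nonzero.

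\textbf{Step 1: three-term recurrence along a diagonal.} Fix the first index $i$ and set $v_j=W_{i,j}$. Using the rule on two consecutive diamonds sharing an edge and eliminating the common entries (a Desnanot--Jacobi / Plücker-type manipulation), I will show
\[
 y_{j-1}\,W_{i,j+1}=c_j\,W_{i,j}-y_j\,W_{i,j-1},\qquad c_j:=W_{j-1,j+1}.
\]
The essential point is that the coefficients depend only on $j$, not on $i$. The identity is checked first in the range where the relevant entries are nonzero, so that division is legitimate. It then extends to the boundary by the convention $W_{i,i}=0$, $W_{i,i+1}=y_i$. The same argument applied to the other family of diagonals gives a recurrence in the first index with the same coefficients $c_j$.

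\textbf{Step 2: monodromy and periodicity of the quiddity row.} I rewrite Step 1 as $(W_{i,j},W_{i,j+1})^{T}=M_j\,(W_{i,j-1},W_{i,j})^{T}$, where $M_j$ is a $2\times2$ matrix built from $c_j,y_{j-1},y_j$ with $\det M_j=y_j/y_{j-1}$. The product $P=M_{i+n}\cdots M_{i+1}$ therefore has determinant $y_{i+n}/y_i=1$. Applying $P$ to the starting data $(0,y_i)$ of the diagonal through $W_{i,i}$ lands on $(0,y_{i+n})$, using the closing rows $W_{i,i+n}=0$ and $W_{i,i+n+1}$. Comparing with the diagonal starting one step later gives a second, independent vector with a known image. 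Together these force $P=\mathrm{id}$. Running the recurrence one more step then gives $c_{j+n}=c_j$, so the quiddity row is $n$-periodic.

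\textbf{Step 3: the whole strip.} Entries are determined row by row from the two boundary rows by $W_{i,j+1}=(W_{i+1,j}W_{i,j+1}\cdots)$, i.e.\ by solving the rule for the new entry, which is possible because interior entries are nonzero. Equivalently, they are determined from the quiddity row via Step 1. Since the data $y_i$ and $c_j$ are $n$-periodic, induction on the row index gives $W_{i+n,j+n}=W_{i,j}$.

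\textbf{Least period.} If $d$ is a period with $0<d<n$, then $W_{i,i+1}=y_i$ would give $y_i=y_{i+d}$. This contradicts the algebraic independence of $y_1,\dots,y_n$, so $n$ is the least positive period.

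I expect the main obstacle to be Step 2, specifically proving $P=\mathrm{id}$ rather than merely a scalar or unipotent matrix. This requires careful bookkeeping of the two boundary rows near the wrap-around and checking that the two test vectors really are linearly independent. If that bookkeeping gets messy, my fallback is to prove the glide-reflection identity $W_{i,j}=W_{j,i+n}$ directly, by induction on $j-i$ using the rule. Composing this identity twice yields the translation by $n$.
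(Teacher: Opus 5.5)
Your route is the paper's: Step~1 is exactly \eqref{eq:general-rec} with $c_j=a_{j-1}$. It is followed, as in the paper, by transfer matrices of determinant $y_j/y_{j-1}$, a monodromy of determinant $1$, periodicity of the quiddity, periodicity of the whole strip by uniqueness of the recurrence, and the least period read off the boundary row.

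The genuine problem is in Step~2: the target $P=\mathrm{id}$ is false, so no bookkeeping can reach it. The entry $W_{i,i+n+1}$ is not a closing row. It lies outside the strip $0\le j-i\le n$ and is defined only by running your recurrence once past the zero row. That gives $y_{i+n-1}W_{i,i+n+1}=c_{i+n}\cdot 0-y_{i+n}W_{i,i+n-1}=-y_iy_{i-1}$, i.e.\ $W_{i,i+n+1}=-y_i$. Hence $P(0,y_i)^T=(0,-y_i)^T$, not $(0,y_{i+n})^T$; this is the paper's $f^{(s)}_n=-y_s$. Already for $n=3$ with all weights equal to $1$, one has $M_j=\begin{pmatrix}0&1\\-1&1\end{pmatrix}$ and $M_j^3=-I$. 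The true statement is $P=-\mathrm{id}$, the decorated form of the classical anti-periodicity (compare \cref{rem:vector-antiperiodic}).

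The repair is local. From $\det P=1$ and $P(0,1)^T=-(0,1)^T$, $P$ is $-1$ times a unipotent matrix, and you still have to kill the off-diagonal entry. The paper does this with the conjugation $P_{s+1}=A_{s+n}P_sA_s^{-1}$, whose relevant entry is $-c_sy_s/y_{s+1}$ and must vanish. Your ``second vector'' can replace this, but it has to be specified:
\begin{itemize}
\item The diagonal with first index $i-1$ is circular. Its image under $P$ is $-(y_{i-1},a_{i+n-1})^T$, which involves exactly the quantity whose periodicity is at stake.
\item The diagonal with first index $i+1$ works once you extend it one step backwards by your recurrence at $j=i+1$, which forces $W_{i+1,i}=-y_i$. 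Then $P(-y_i,0)^T=(W_{i+1,i+n},W_{i+1,i+n+1})^T=(y_i,0)^T$, again with eigenvalue $-1$.
\end{itemize}
With $P_i=-\mathrm{id}$ for every $i$, the relation $M_{i+n+1}=P_{i+1}M_{i+1}P_i^{-1}=M_{i+1}$ gives $c_{j+n}=c_j$. The rest of your argument then stands.

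In Step~3, use the quiddity as initial data. The two top rows alone do not determine the third, since the diamond between rows $0,1,2$ is vacuous.

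Your fallback is not a shortcut either. The glide relation contains, e.g., $W_{i,i+n-2}=a_{i+n-2}$, which a direct induction on $j-i$ from the top rows does not reach. The paper obtains the glide reflection from $W_{i,j}=\det(u_i,u_j)$ and the Pl\"ucker relation, and these themselves rest on the diagonal recurrence.
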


%This result requires neither positivity nor Laurent positivity. 
It requires no positivity
assumption; only the nonvanishing condition built into the definition of a closed decorated frieze is used.
It follows directly from the decorated second-order recurrence and the closing boundary conditions. We also prove algebraically the stronger glide-reflection relation
\[
 W_{i,j}=W_{j,i+n}.
\]
A determinant representation $W_{i,j}=\det(u_i,u_j)$ yields the full Ptolemy relation through the Pl\"ucker identity, and the glide reflection follows from the closing boundary rows. Thus the infinite strip is determined by the finite triangular fundamental set
$\{W_{i,j}:1\leq i<j\leq n\}$. These facts provide the algebraic background for the classification, but the main result of the paper is the decorated counterpart of (CC2).

For this purpose we impose positivity and a normalization. We consider decorated friezes whose entries belong to
\[
 \Z_{\geq0}\,
 [x_1^{\pm1},\ldots,x_m^{\pm1},
 y_1^{\pm1},\ldots,y_n^{\pm1}]
 \setminus\{0\}.
\]
The distinguished variables $x_1,\ldots,x_m$ are intended to represent the $m=n-3$ diagonals of a triangulation. Our normalization requires that the Laurent monomial interior entries are precisely these distinguished variables, with no unused $\bx$-variables.

Let $P_n$ be a convex $n$-gon with cyclically labeled vertices
$v_1,\ldots,v_n$. An $(\bx,\by)$-weighted triangulation means that the boundary edge $[v_i,v_{i+1}]$ has weight $y_i$, while the $m$ internal diagonals are labeled bijectively by $x_1,\ldots,x_m$. Our principal result is the following.

\begin{theorem}[Decorated Conway--Coxeter theorem -- (DCC2)]
\label{thm:intro-classification}
For every $m\geq0$ and $n=m+3$, there is a canonical bijection
\begin{equation*}
\begin{gathered}
 \{(\bx,\by)\text{-weighted triangulations $\Delta$ of }P_n\}\\
 \longleftrightarrow\\
 \{\text{normalized positive Laurent decorated friezes $\cF$
 of width }m\}.
\end{gathered}
\end{equation*}
Moreover, the quiddity entries are given directly by the explicit
geometric formula \eqref{eq:intro-quiddity}.
\end{theorem}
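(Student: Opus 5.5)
We construct the two maps, check that each lands in the right class, and show they are mutually inverse, arguing by induction on $m$ throughout.

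\textbf{Forward map.} Given an $(\bx,\by)$-weighted triangulation $\Delta$, assign to every pair of vertices $(v_i,v_j)$ a weight $W_{i,j}$. For a boundary edge or a diagonal this is its label ($y_i$ or the corresponding $x_k$), with $W_{i,i}=0$. For a non-edge the weight is produced by Nishiyama's weighted propagation: whenever a quadrilateral $v_iv_jv_kv_l$ in cyclic order has five of its six segments already weighted, the sixth is determined by the Ptolemy relation
\[
 W_{i,k}W_{j,l}=W_{i,j}W_{k,l}+W_{i,l}W_{j,k}.
\]
This propagation is well defined, and every entry is a subtraction-free expression in the labels, hence lies in $\Z_{\geq0}[\bx^{\pm1},\by^{\pm1}]$. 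Both facts follow from the standard type-$A$ cluster algebra with boundary coefficients: Laurent positivity there is realized by the perfect-matching (or $T$-path) expansion of Propp and Schiffler.

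Next, arrange the weights in the strip with $W_{i,i+1}=y_i$, extended by the glide reflection $W_{i,j}=W_{j,i+n}$. The diamond rule \eqref{eq:intro-rule} is the Ptolemy relation for the quadrilateral $v_iv_{i+1}v_jv_{j+1}$, whose two sides $[v_i,v_{i+1}]$ and $[v_j,v_{j+1}]$ contribute the product $y_iy_j$. Nonvanishing follows from positivity.

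For normalization, note that an entry is a Laurent monomial exactly when the segment meets no diagonal transversally, that is, when it is itself a side or a diagonal of $\Delta$. Hence the monomial interior entries are precisely $x_1,\ldots,x_m$, each used once.

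The quiddity formula \eqref{eq:intro-quiddity} is proved by induction on $m$ via ear removal. Choose an ear $v_{k-1}v_kv_{k+1}$ with diagonal $x$. Then
\[
 W_{k-1,k+1}=x,
\]
and the entries at $v_{k\pm1}$ change by the weighted analogue of ``$a_{k\pm1}$ increases by one''. I would verify this on a fan at a single vertex and then check that it agrees with the explicit sum over the triangles incident to each vertex.

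\textbf{Inverse map.} Given a normalized positive Laurent frieze $\cF$, read off the segments $[v_i,v_j]$ for which $W_{i,j}$ is one of the $x_k$. This needs four steps.
\begin{enumerate}[label=(\roman*)]
\item The specialization $\ev$ (all variables set to $1$) sends $\cF$ to a positive integral closed frieze, since the diamond rule becomes unimodular and the entries are positive integers. By the classical Conway--Coxeter theorem, $\ev(\cF)$ comes from a triangulation $T$.
\item The $m$ diagonals of $T$ are exactly the pairs $(i,j)$ with $\ev W_{i,j}=1$.
\item Positivity of coefficients shows that an entry with $\ev W_{i,j}=1$ is a single Laurent monomial. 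By normalization these monomials are the $x_k$, and they are distinct, so there are exactly $m$ of them.
\item These segments therefore label $T$ bijectively, giving a weighted triangulation $\Delta$.
\end{enumerate}

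To see that $\cF$ equals the frieze built from $\Delta$, use decorated cutting and gluing. Take an ear at a vertex $v_k$ of $T$. Cut it off to obtain a normalized frieze of width $m-1$, with the new boundary label equal to the $x$ on the ear's diagonal. By induction this smaller frieze is the frieze of the smaller triangulation. Gluing the ear back is then forced by the diamond rule, because a closed frieze is determined by its quiddity row together with its boundary rows through the second-order recurrence from (DCC1). The two constructions are inverse by construction.

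\textbf{Main obstacle.} The delicate step is the cutting step. One must show that deleting the vertex $v_k$ yields a genuine decorated frieze. Its diamond rules hold because the truncated strip consists of the entries $W_{i,j}$ with $i,j\neq k$, and these satisfy the full Ptolemy relations obtained from the determinant representation $W_{i,j}=\det(u_i,u_j)$. One must also show that normalization survives, i.e., that no new monomial entries appear. I would derive the latter from the $\ev$ argument applied to the smaller classical frieze.
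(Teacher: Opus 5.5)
Your treatment of the bijection follows the paper's route: specialize to $1$, identify the monomial skeleton, cut off an ear, apply induction, and glue back using uniqueness from the recurrence. You implement the cut differently. Instead of the transfer-matrix identity of \cref{lem:matrix}, you restrict the determinant representation to $\{u_j\}_{j\not\equiv k}$ and use the Pl\"ucker identity. This works, provided you also check the new boundary rows, e.g.\ $W_{k+1,k+n-1}=W_{k-1,k+1}=x$ by glide symmetry and periodicity.

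The genuine gap is the ``Moreover'' clause. You never state the weighted update rule under ear attachment. You also never show that the geometric expression \eqref{eq:intro-quiddity} obeys it. ``Verify on a fan and then check that it agrees with the explicit sum'' restates what must be proved. Moreover, a fan computation alone would only suffice after observing that the quiddity chord at $v$ crosses exactly the diagonals in $D_v$, so its value depends only on the star of $v$; you do not say this either.

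What is missing is the two-sided computation that is the content of \cref{prop:ear-attach}. On the frieze side, the elementary diamonds through the ear diagonal give
\[
 x\,W_{k-2,k}=y_{k-2}y_k+y_{k-1}W_{k-2,k+1},
 \qquad
 x\,W_{k,k+2}=y_{k-1}y_{k+1}+y_kW_{k-1,k+2},
\]
where $W_{k-2,k+1}$ and $W_{k-1,k+2}$ are the quiddity entries at $v_{k-1}$ and $v_{k+1}$ of the cut frieze. You already use exactly these relations, implicitly, in your gluing step.

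On the geometric side you must show that $Q$ transforms in the same way. When the ear is attached at $v_{k-1}$, three things happen:
\begin{itemize}
\item the former boundary edge of weight $x$ becomes a diagonal at $v_{k-1}$, so the denominator acquires the factor $x$;
\item every old summand acquires the weight $y_{k-1}$ of the new incident edge $[v_{k-1},v_k]$, which lies in no old triangle;
\item the ear contributes exactly one new summand $y_k\,y_{k-2}\prod_{d\in D'_{v_{k-1}}}\wt(d)$.
\end{itemize}
Hence $Q_\Delta(v_{k-1})=\bigl(y_{k-1}Q_{\Delta'}(v_{k-1})+y_{k-2}y_k\bigr)/x$. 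The same holds symmetrically at $v_{k+1}$, and all other stars are unchanged. Together with the triangle as base case, this closes the induction. Without this bookkeeping your proposal proves the bijection, but not the explicit quiddity formula that the statement asserts.
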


There are two directions in this correspondence, and it is useful to
distinguish them. Starting from a weighted triangulation, the decorated
Conway--Coxeter algorithm described by Nishiyama
\cite[Chapter~11]{Nishiyama2022} successively determines the entries of
the decorated frieze. Equivalently, the same entries can be obtained by
Ptolemy propagation or by the type $A$ perfect-matching formula. In
particular, the existence of a positive Laurent decorated frieze
associated with a weighted triangulation is already suggested by this
weighted propagation theory.

A new feature of the present paper in this direction is that the
quiddity need not be found by carrying out the propagation. For a
vertex $v$, let $E_v$ be the collection of all triangulation edges
incident to $v$, let $D_v\subset E_v$ be the internal diagonals
incident to $v$, and, for a triangle $T$ containing $v$, let
$E_v(T)$ denote the two sides of $T$ incident to $v$ and
$\operatorname{opp}_v(T)$ the side opposite $v$. We prove that the
quiddity entry associated with $v$ is
\begin{equation}\label{eq:intro-quiddity}
 Q_\Delta(v)=
 \frac{
 \displaystyle
 \sum_{\substack{T\in\Delta,\, v\in T}}
 \wt\bigl(\operatorname{opp}_v(T)\bigr)
 \prod_{e\in E_v\setminus E_v(T)}\wt(e)}
 {\displaystyle\prod_{d\in D_v}\wt(d)}.
\end{equation}
This is formula \eqref{eq:geometric-quiddity} in Section \ref{sec:geometry}. 
Its form is one of
the main ingredients of our classification theorem. 
%Each triangle
%incident to $v$ contributes exactly one term: the weight of the side
%opposite $v$ is multiplied by the weights of all other edges incident
%to $v$ that do not belong to that triangle, while the product of the
%incident diagonal weights forms the common denominator. Thus 
The
formula reads the quiddity directly from the weighted star of the
vertex, without computing any other frieze entries. When all weights
are specialized to $1$, every summand becomes $1$, and
\eqref{eq:intro-quiddity} reduces precisely to the classical
Conway--Coxeter rule counting the triangles incident to $v$.

The converse direction of (DCC2) is more subtle and constitutes the
main rigidity statement. Starting with an abstract normalized positive
Laurent decorated frieze, one must show that it actually comes from a
weighted triangulation. The local rule \eqref{eq:intro-rule} by itself
does not provide such a triangulation. Our argument first specializes
all variables $x_k$ and $y_i$ to $1$. The resulting array is an
ordinary positive integral Conway--Coxeter frieze and hence determines
a unique triangulation of the labeled polygon. Laurent positivity then
allows this triangulation to be recovered already before specialization:
the entries occupying its diagonals are exactly the monomial entries of
the decorated frieze. We call this collection the \emph{monomial
skeleton}. By normalization, its elements are precisely
$x_1,\ldots,x_m$.

This observation permits the classical cutting-and-gluing proof of
Conway and Coxeter to be lifted to the decorated setting. In the
classical theory an ear corresponds to a quiddity entry $1$, and
cutting off the ear replaces the quiddity sequence
\[
 (\ldots,a_{i-1},1,a_{i+1},\ldots)
 \quad\text{by}\quad
  (\ldots,a'_{i-1},a'_{i},\ldots)=(\ldots,a_{i-1}-1,a_{i+1}-1,\ldots).
\]
In our setting an ear is detected by a monomial quiddity entry
$a_i=x$, and the neighboring entries are replaced by
\begin{equation}\label{eq:intro-reduction}
 a'_{i-1}
 =\frac{a_{i-1}x-y_{i-1}y_{i+1}}{y_i},
 \qquad
 a'_i
 =\frac{x a_{i+1}-y_i y_{i+2}}{y_{i+1}}.
\end{equation}
A transfer-matrix identity shows that this reduction deletes exactly
one inverse $V$-shaped strip from the frieze and produces a decorated
frieze of one smaller width. On the geometric side, the same formulae
for the geometric quiddity sequence given by \eqref{eq:intro-quiddity}
are obtained by removing the ear whose opposite diagonal has weight
$x$. Conversely, attaching the ear gives the inverse transformation.
The algebraic and geometric inductions therefore agree step by step.

The explicit quiddity formula plays the
important role in this comparison. Nishiyama's propagation algorithm
shows how a weighted triangulation generates the complete decorated
frieze, whereas \eqref{eq:intro-quiddity} identifies, in closed form,
the quiddity data that drive the second-order recurrence. In this
sense, the formula supplies a direct weighted analogue of the most
geometric part of the classical Conway--Coxeter theorem: the first
nontrivial row can be read immediately from the triangulation itself.
Together with the converse reconstruction from the monomial skeleton,
this yields the bijection in (DCC2).

It is useful to distinguish this classification statement from the
broader theory of friezes with coefficients. The local decorated
unimodular relation, Ptolemy propagation, and related reduction
formulae have analogues in the coefficient theory
\cite{CHJ2020,CHJ2024}. What is specific here is the rigidity of the
normalized Laurent-positive symbolic class: the abstract frieze itself
determines a unique weighted triangulation, while the weighted
triangulation determines not only the full frieze by propagation but
also its quiddity directly through
\eqref{eq:intro-quiddity}. Under the specialization
$x_1=\cdots=x_m=y_1=\cdots=y_n=1$, this correspondence becomes exactly
the classical Conway--Coxeter theorem.

The paper is organized as follows. In Section \ref{sec:periodicity} we
define closed decorated friezes without assuming periodicity, derive
the decorated difference equation, and establish periodicity,
the determinant representation, the full Ptolemy relation, and
glide-reflection symmetry. In Section \ref{sec:positive} we introduce
positive Laurent decorated friezes and the normalization condition,
and identify their monomial skeleton by specialization to the
classical theory. In Section \ref{sec:geometry} we study weighted
triangulations and prove the explicit geometric quiddity formula
\eqref{eq:intro-quiddity}. In Section \ref{sec:reduction} we
develop the algebraic reduction at a monomial quiddity entry and show
that it agrees exactly with weighted ear removal. Finally, in Section
\ref{sec:main} we combine these ingredients to prove the decorated
Conway--Coxeter classification theorem (DCC2).

\section{Closed decorated friezes and algebraic periodicity}\label{sec:periodicity}

\subsection{Algebraic periodicity}
For $n=m+3$, $m \geq 0$, put
\[
 \cR_m=\Z\,[x_1^{\pm1},\ldots,x_m^{\pm1},y_1^{\pm1},\ldots,y_n^{\pm1}],
\]
and extend the boundary variables by the convention
\[
 y_{i+n}=y_i\qquad (i\in\Z).
\]
The periodic extension of the \emph{boundary decoration} is part of the data; no periodicity of the interior entries is assumed.
Here, the interior entries refer to those in the stripe of width $m$ between the $2$-nd and $(n-2)$-th rows in 
Figure \ref{fig:frieze}.

\begin{figure}[h]
\centering
\resizebox{0.97\textwidth}{!}{%
\begin{tikzpicture}[x=1.4cm,y=0.62cm,font=\small]
\def\rows{
  {$y_1$, $y_2$, $y_3$, $\cdots$, $y_{i-1}$, $y_i$, $y_{i+1}$, $\cdots$, $y_{n-1}$, $y_n$, $y_1$},
  {$W_{1,3}$, $W_{2,4}$, $W_{3,5}$, $\cdots$, $W_{i-1,i+1}$, $W_{i,i+2}$, $W_{i+1,i+3}$, $\cdots$, $W_{n-1,n+1}$, $W_{n,n+2}$},
  {$W_{1,4}$, $W_{2,5}$, $W_{3,6}$, $\cdots$, $W_{i-1,i+2}$, $W_{i,i+3}$, $W_{i+1,i+4}$,$\cdots$, $W_{n-1,n+2}$}, 
  {$\ddots$, $\ $, $\ $},
  {$\ddots$, },
  {$W_{1,j}$, $W_{2,j+1}$, $W_{3,j+2}$, $\cdots$},
  {$\ddots$},
  {$W_{1,n-1}$, $W_{2,n}$, $W_{3,n+1}$, $\cdots$},
  {$y_n$, $y_1$, $y_2$, $\cdots$},
}
\foreach \row [count=\r from 0] in \rows {
  \foreach \val [count=\c from 0] in \row {
    \node at ({\c+0.5*\r},{-\r}) {\val};
  }
}
\end{tikzpicture}}
\caption{The arrangement of a closed decorated frieze of width $m$.}
\label{fig:frieze}
\end{figure}

\begin{definition}\label{def:closed-decorated}
A \emph{closed decorated frieze of width $m$} is a frieze-shaped strip $\cF=(W_{i,j})$ with entries in $\cR_m$, 
indexed by $i,j\in\Z$ with $0 \leq j-i \leq n$,
and satisfying
\begin{enumerate}[label=\textup{(D\arabic*)},leftmargin=2.6em]
\item the zero boundary rows (the $0$-th and $n$-th rows not appearing in Figure \ref{fig:frieze}) are
\[
 W_{i,i}=0,\qquad W_{i,i+n}=0;
\]
\item the two nonzero boundary rows (the $1$-st and $(n-1)$-th rows in Figure \ref{fig:frieze}) are
\[
 W_{i,i+1}=y_i,\qquad W_{i,i+n-1}=y_{i-1};
\]
\item every interior entry is nonzero;
\item every elementary diamond in the strip satisfies
\begin{equation}\label{eq:diamond}
 W_{i,j}W_{i+1,j+1}-W_{i+1,j}W_{i,j+1}
 =y_i y_j
 =W_{i,i+1}W_{j,j+1}.
\end{equation}
\end{enumerate}
No horizontal periodicity is included in the definition.
\end{definition}

\begin{remark}
The nonvanishing assumption in {\rm (D3)} is convenient rather than
essential. It is said that $\mathcal F$ is \emph{tame} if
$$
\det
\begin{pmatrix}
W_{i,j}&W_{i+1,j}&W_{i+2,j}\\
W_{i,j+1}&W_{i+1,j+1}&W_{i+2,j+1}\\
W_{i,j+2}&W_{i+1,j+2}&W_{i+2,j+2}
\end{pmatrix}=0.
$$
Since $\mathcal R_m$ is an integral domain, {\rm (D3)} implies
tameness by the standard argument; compare
\cite[Proposition~2.6]{CHJ2020}.  Then, the Ptolemy propagation relations
of \cite[Theorem~3.3]{CHJ2020} apply. For the positive Laurent friezes considered from Section~3 onward,
{\rm (D3)} is automatic.
\end{remark}

The \emph{quiddity row} is the bi-infinite sequence
\[
 a_i=W_{i,i+2}\qquad(i\in\Z).
\]
At this stage it is not assumed to be periodic.

For each $s\in\Z$, consider the diagonal starting from the boundary entry $y_s$. Set
\begin{equation*}\label{eq:diag-def}
 f^{(s)}_{-1}=0,
 \qquad
 f^{(s)}_r=W_{s,s+r+1}\quad(0\leq r\leq n-1).
\end{equation*}
We call this the $s$-th diogonal. Thus
\[
 f^{(s)}_0=y_s,
 \qquad
 f^{(s)}_1=a_s,
 \qquad
 f^{(s)}_{n-2}=y_{s-1},
 \qquad
 f^{(s)}_{n-1}=0.
\]

\begin{proposition}[Decorated difference equation]\label[proposition]{prop:recurrence}
For every $s\in\Z$, the diagonal sequence satisfies
\begin{equation}\label{eq:general-rec}
 f^{(s)}_{r+1}
 =\frac{a_{s+r}}{y_{s+r}}f^{(s)}_r
 -\frac{y_{s+r+1}}{y_{s+r}}f^{(s)}_{r-1}
 \qquad(0\leq r\leq n-2).
\end{equation}
The same recurrence continues algebraically one step beyond the lower zero boundary and gives
\begin{equation*}\label{eq:anti-end}
 f^{(s)}_n=-y_s.
\end{equation*}
\end{proposition}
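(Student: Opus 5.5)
The plan is to derive \eqref{eq:general-rec} from the diamond rule \eqref{eq:diamond} applied to two adjacent diagonals, the $s$-th and the $(s+1)$-st, together with the quiddity row. First I record a ``Wronskian'' identity. The diamond with top-left index $(s,j)$ reads
\[
 W_{s,j}W_{s+1,j+1}-W_{s+1,j}W_{s,j+1}=y_sy_j .
\]
Writing $g_r=f^{(s+1)}_{r-1}=W_{s+1,s+r+1}$, this becomes a relation
\[
 f_{r-1}g_{r+1}-g_rf_r=y_sy_{s+r},
\]
with $f=f^{(s)}$, valid for all $r$ with $1\le r\le n-1$. Both diagonals have fully known boundary values ($W_{i,i}=0$, $W_{i,i+1}=y_i$, and so on).

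The more efficient route is a three-term relation obtained from two consecutive diamonds. Consider the $2\times 2$ minors
\[
 \Delta_r=f_rg_{r+1}-f_{r+1}g_r=y_sy_{s+r+1}.
\]
Then $\Delta_{r-1}f_{r+1}-\Delta_rf_r$ equals
\[
 (f_{r-1}g_r-f_rg_{r-1})f_{r+1}-(f_rg_{r+1}-f_{r+1}g_r)f_r .
\]
Collecting terms gives
\[
 y_sy_{s+r}f_{r+1}-y_sy_{s+r+1}f_{r-1}
 =f_r\bigl(f_{r-1}g_r+f_{r+1}g_r-f_rg_{r-1}-f_rg_{r+1}\bigr)\cdot(\text{check}),
\]
and this is exactly where the bookkeeping needs care. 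A cleaner way is to use the standard trick: since $\Delta_{r-1}$ and $\Delta_r$ are known, the vector $(f_{r-1}+ \lambda f_{r+1})$ is proportional to $f_r$, with a ratio independent of which diagonal is used. Concretely, define
\[
 c_r=\frac{y_{s+r}f_{r+1}+y_{s+r+1}f_{r-1}}{f_r}.
\]
I will show that $c_r$ also equals
\[
 \frac{y_{s+r}g_{r+1}+y_{s+r+1}g_{r-1}}{g_r},
\]
by cross-multiplying and using $\Delta_{r-1},\Delta_r$: the difference is $y_{s+r}\Delta_r\cdot\ldots-y_{s+r+1}\Delta_{r-1}$, which vanishes since both products equal $y_sy_{s+r}y_{s+r+1}$. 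Nonvanishing of interior entries (D3) justifies the divisions.

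Next I identify $c_r$. Since $c_r$ is the same for the diagonals starting at $s$ and at $s+1$, and iterating over all diagonals through the anti-diagonal entry in question, $c_r$ depends only on the column index $t=s+r$. So I evaluate it on the diagonal where $f_r$ is the boundary entry $y_t$, i.e. the diagonal $s'=t$, $r'=0$. With $f_{-1}=0$ and $f_1=a_t$, this gives $c=y_t a_t/y_t=a_t$. Rearranging $y_{t}f_{r+1}=a_tf_r-y_{t+1}f_{r-1}$ yields \eqref{eq:general-rec}. For $r=0$ the recurrence is checked directly, since $f_1=a_s$ is exactly the definition of the quiddity entry.

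The main obstacle is making the ``independent of the diagonal'' step rigorous at the ends of the strip. At $r=n-2$ the entry $f_{n-1}=0$ lies on the zero row, and the diamond rule there must be read with (D1)--(D2). The needed diamonds involving the zero row and the $y_{i-1}$ row (for example $W_{i,i+n-1}W_{i+1,i+n}-0\cdot W=y_{i-1}y_i$) are consistent with \eqref{eq:diamond}, so the argument goes through. For the extension step, I define $f_n$ by the recurrence at $r=n-1$:
\[
 f_n=-\frac{y_{s+n}}{y_{s+n-1}}f_{n-2}=-\frac{y_s}{y_{s-1}}\,y_{s-1}=-y_s,
\]
using $f_{n-1}=0$, $f_{n-2}=y_{s-1}$ and $y_{s+n}=y_s$.
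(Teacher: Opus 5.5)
Your proof is correct and is essentially the paper's argument. The equality of $c_r$ on the diagonals $s$ and $s+1$, obtained from $y_{s+r+1}\Delta_{r-1}=y_{s+r}\Delta_r$ with divisions justified by (D3), is exactly the paper's determinant-subtraction identity \eqref{eq:det-subtraction}, and your chaining down to the boundary diagonal $s'=s+r$ is the paper's induction on $r$ over all starting indices written out step by step.

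Two cosmetic fixes. Your first displayed Wronskian relation is mis-indexed; it should read $f_{r-1}g_r-g_{r-1}f_r=y_sy_{s+r}$, which is $\Delta_{r-1}$. The aborted ``(check)'' computation should be deleted, since the argument you actually use relies only on the correctly indexed $\Delta_{r-1},\Delta_r$ for $1\le r\le n-2$.
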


\begin{proof}
The local coefficient rule implies the Ptolemy propagation relation; see \cite[Theorem~3.3]{CHJ2020}. Applied to the four relevant boundary vertices, it gives
\[
 a_{s+r}f^{(s)}_r
 =y_{s+r}f^{(s)}_{r+1}+y_{s+r+1}f^{(s)}_{r-1},
\]
which is equivalent to \eqref{eq:general-rec}. This relation can also be obtained directly by the standard determinant subtraction used in the classical proof of the frieze recurrence.

To make this determinant subtraction explicit, let us compare the diagonal
$f^{(s)}$ with the adjacent diagonal $f^{(s+1)}$.  For $r\geq1$, the
diamond relation gives
\[
 f^{(s)}_r f^{(s+1)}_r
 -
 f^{(s)}_{r+1} f^{(s+1)}_{r-1}
 =
 y_s y_{s+r+1},
\]
and, after shifting the index by one,
\[
 f^{(s)}_{r-1} f^{(s+1)}_{r-1}
 -
 f^{(s)}_r f^{(s+1)}_{r-2}
 =
 y_s y_{s+r}.
\]
Equivalently,
\[
 \frac{1}{y_{s+r+1}}
 \begin{vmatrix}
 f^{(s)}_r & f^{(s+1)}_{r-1}\\
 f^{(s)}_{r+1} & f^{(s+1)}_r
 \end{vmatrix}
 =
 y_s
=
 \frac{1}{y_{s+r}}
 \begin{vmatrix}
 f^{(s)}_{r-1} & f^{(s+1)}_{r-2}\\
 f^{(s)}_r & f^{(s+1)}_{r-1}
 \end{vmatrix}.
\]
Subtracting these two identities yields
\begin{equation}\label{eq:det-subtraction}
 \begin{vmatrix}
 f^{(s)}_r & f^{(s+1)}_{r-1}\\[2mm]
 \displaystyle
 \frac{f^{(s)}_{r+1}}{y_{s+r+1}}
 +\frac{f^{(s)}_{r-1}}{y_{s+r}}
 &
 \displaystyle
 \frac{f^{(s+1)}_r}{y_{s+r+1}}
 +\frac{f^{(s+1)}_{r-2}}{y_{s+r}}
 \end{vmatrix}
 =0.
\end{equation}

This calculation can also be organized as an induction on $r$.  The case
$r=0$ of \eqref{eq:general-rec} is immediate from
$f^{(s)}_{-1}=0$, $f^{(s)}_0=y_s$, and $f^{(s)}_1=a_s$.
Assume that the recurrence has already been established at level $r-1$
for every starting index.  Applying it to the adjacent diagonal
$f^{(s+1)}$ gives
%\[
% f^{(s+1)}_r
% =
% \frac{a_{s+r}}{y_{s+r}}f^{(s+1)}_{r-1}
% -
% \frac{y_{s+r+1}}{y_{s+r}}f^{(s+1)}_{r-2},
%\]
%or, equivalently,
\begin{equation}\label{eq:adjacent-recurrence-rearranged}
 \frac{f^{(s+1)}_r}{y_{s+r+1}}
 +
 \frac{f^{(s+1)}_{r-2}}{y_{s+r}}
 =
 \frac{a_{s+r}}{y_{s+r}y_{s+r+1}}
 f^{(s+1)}_{r-1}.
\end{equation}
Substituting \eqref{eq:adjacent-recurrence-rearranged} into
\eqref{eq:det-subtraction} and expanding the determinant gives
\[
 f^{(s+1)}_{r-1}
 \left(
 \frac{a_{s+r}}{y_{s+r}y_{s+r+1}}f^{(s)}_r
 -
 \frac{f^{(s)}_{r+1}}{y_{s+r+1}}
 -
 \frac{f^{(s)}_{r-1}}{y_{s+r}}
 \right)
 =0.
\]
Since the factor
$f^{(s+1)}_{r-1}$ is nonzero by {\rm (D3)}, 
\[
 \frac{f^{(s)}_{r+1}}{y_{s+r+1}}
 =
 \frac{a_{s+r}}{y_{s+r}y_{s+r+1}}f^{(s)}_r
 -
 \frac{f^{(s)}_{r-1}}{y_{s+r}}.
\]
Multiplying by $y_{s+r+1}$ gives exactly \eqref{eq:general-rec}.
%\[
% f^{(s)}_{r+1}
% =
% \frac{a_{s+r}}{y_{s+r}}f^{(s)}_r
% -
% \frac{y_{s+r+1}}{y_{s+r}}f^{(s)}_{r-1}.
%\]
%Thus the usual determinant subtraction explains directly how the
%second-order decorated recurrence propagates from one diagonal to the
%next.  In the general algebraic setting of the present proposition,
%the Ptolemy argument above establishes the same identity without any
%nonvanishing assumption on the interior entries.

For $r=n-1$, use $f^{(s)}_{n-1}=0$, $f^{(s)}_{n-2}=y_{s-1}=y_{s+n-1}$, and $y_{s+n}=y_s$. The same recurrence gives
\[
 f^{(s)}_n
 =-\frac{y_{s+n}}{y_{s+n-1}}f^{(s)}_{n-2}
 =-y_s.
\]
This completes the proof.
\end{proof}

For fixed boundary data and quiddity, the recurrence determines every diagonal and hence the whole frieze.

\begin{corollary}\label[corollary]{cor:unique}
For fixed boundary data $\by$, there is at most one closed decorated frieze with a prescribed quiddity sequence $\ba$.
\end{corollary}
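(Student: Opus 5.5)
The plan is to show that the quiddity row $\ba$ and the boundary data $\by$ determine every entry of $\cF$ through the recurrence of \Cref{prop:recurrence}. Let $\cF=(W_{i,j})$ and $\cF'=(W'_{i,j})$ be two closed decorated friezes with the same boundary decoration $\by$ and the same quiddity row, so that $W_{i,i+2}=W'_{i,i+2}=a_i$ for all $i\in\Z$. I will prove $W_{i,j}=W'_{i,j}$ for all $i,j$ with $0\le j-i\le n$. I will do this one diagonal at a time: fix $s\in\Z$ and compare the two diagonal sequences $f^{(s)}_r=W_{s,s+r+1}$ and $f'^{(s)}_r=W'_{s,s+r+1}$.

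The argument is an induction on $r$, carried out inside the ring $\cR_m$.
\begin{enumerate}[label=\textup{(\roman*)}]
\item Base cases. We have $f^{(s)}_{-1}=f'^{(s)}_{-1}=0$ by the convention in the definition of the diagonal. We have $f^{(s)}_0=f'^{(s)}_0=y_s$ by (D2). We have $f^{(s)}_1=f'^{(s)}_1=a_s$ by hypothesis.
\item Inductive step. \Cref{prop:recurrence} gives, for $0\le r\le n-2$,
\[
 f^{(s)}_{r+1}=\frac{a_{s+r}}{y_{s+r}}f^{(s)}_r-\frac{y_{s+r+1}}{y_{s+r}}f^{(s)}_{r-1},
\]
and the same identity holds for $f'^{(s)}$. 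The coefficients $a_{s+r}/y_{s+r}$ and $y_{s+r+1}/y_{s+r}$ are the same in both recurrences. They lie in $\cR_m$ because each $y_i$ is a unit there.
\end{enumerate}
So the second-order recurrence, together with the two initial values in (i), forces $f^{(s)}_r=f'^{(s)}_r$ for all $0\le r\le n-1$.

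The index ranges need one check. As $s$ runs over $\Z$ and $r$ over $0\le r\le n-1$, the entries $W_{s,s+r+1}$ cover every entry with $1\le j-i\le n$. The remaining row $j-i=0$ is identically $0$ by (D1), so it agrees in both friezes. Hence $\cF=\cF'$.

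No step is a real obstacle once \Cref{prop:recurrence} is available. The one point to watch is that the recurrence is used as a forward determination: it never divides by an interior entry, only by the units $y_i$. The nonvanishing hypothesis (D3) is needed only inside the proof of \Cref{prop:recurrence}, which we take as given.
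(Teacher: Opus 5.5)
Your proposal is correct and matches the paper's proof, which says in one line that the corollary follows by iterating the recurrence \eqref{eq:general-rec} of \cref{prop:recurrence} from the upper boundary row; your diagonal-by-diagonal induction writes that out in full. Your observation that the recurrence divides only by the units $y_i$, with (D3) used only inside the proof of \cref{prop:recurrence}, is also accurate.
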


\begin{proof}
This follows immediately by iterating the recurrence \eqref{eq:general-rec} from the upper boundary row.
\end{proof}

Define the transfer matrices
\begin{equation*}\label{eq:A}
 A_i=
 \begin{pmatrix}
 a_i/y_i&-y_{i+1}/y_i\\
 1&0
 \end{pmatrix}.
\end{equation*}
Then \eqref{eq:general-rec} is equivalent to
\begin{equation*}\label{eq:Arec}
 \binom{f^{(s)}_{r+1}}{f^{(s)}_r}
 =A_{s+r}
 \binom{f^{(s)}_r}{f^{(s)}_{r-1}}.
\end{equation*}
Notice that
\begin{equation}\label{eq:detA}
 \det A_i=\frac{y_{i+1}}{y_i}.
\end{equation}

\begin{theorem}[Algebraic periodicity]\label{thm:periodicity}
Let $\cF$ be a closed decorated frieze of width $m$, and put $n=m+3$. Then
$a_{i+n}=a_i$ $(i\in\Z)$,
and consequently
$W_{i+n,j+n}=W_{i,j}$
throughout the strip. Thus $n$ is a period of every closed decorated frieze of width $m$.
\end{theorem}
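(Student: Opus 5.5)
Since the whole frieze is determined by the boundary data $\by$ and the quiddity row by \Cref{cor:unique}, and $\by$ is $n$-periodic by convention, it suffices to prove $a_{i+n}=a_i$ for all $i$. Then the shifted strip $W'_{i,j}=W_{i+n,j+n}$ is again a closed decorated frieze with the same boundary data and the same quiddity row, so $W_{i+n,j+n}=W_{i,j}$ follows from \Cref{cor:unique}.

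To get periodicity of the quiddity, I will use the transfer matrices $A_i$ and the endpoint information of \Cref{prop:recurrence}. Put $M_s=A_{s+n-1}\cdots A_{s+1}A_s$. Iterating the matrix form of \eqref{eq:general-rec} from $(f^{(s)}_0,f^{(s)}_{-1})^T=(y_s,0)^T$ to $(f^{(s)}_n,f^{(s)}_{n-1})^T=(-y_s,0)^T$ gives
\[
 M_s\binom{1}{0}=-\binom{1}{0}.
\]
This fixes the first column of $M_s$. By \eqref{eq:detA}, $\det M_s=\prod_{k=s}^{s+n-1}y_{k+1}/y_k=y_{s+n}/y_s=1$. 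Hence $M_s=\begin{pmatrix}-1&c_s\\0&-1\end{pmatrix}$ for some $c_s$.

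To pin down $c_s$, I will also run the recurrence on the diagonal through a second initial vector. The cleanest choice is to compare with the adjacent diagonal. Apply the recurrence to $f^{(s+1)}$, which starts at index $s+1$ and is governed by $A_{s+1},\dots$. Its vector at position $r=0$ is $(y_{s+1},0)^T$. Moreover, $(f^{(s)}_1,f^{(s)}_0)^T=A_s(y_s,0)^T=(a_s,y_s)^T$. Thus $M_{s+1}$ is conjugate to $M_s$: we have $M_{s+1}A_s=A_{s+n}M_s$. Both matrices have $-1$ as a double eigenvalue, and $M_{s+1}$ must also fix the direction $(1,0)^T$ with eigenvalue $-1$.

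The main obstacle is to show $c_s=0$, i.e.\ $M_s=-I$. For this I will use the second column, which is obtained by starting the product at vector $(0,1)^T$. That column encodes the sequence $g_r$ with $g_{-1}=1$, $g_0=0$, which is proportional to the diagonal $f^{(s-1)}$ shifted by one. Indeed $f^{(s-1)}$ has $f^{(s-1)}_0=y_{s-1}$ and $f^{(s-1)}_1=a_{s-1}$, and applying $A_{s-1}$ relates it to vectors with second coordinate data. Concretely, $(f^{(s-1)}_1,f^{(s-1)}_0)^T$ and $(y_s,0)^T$ span the plane, and after $n$ steps the first is sent to $(f^{(s-1)}_{n+1},f^{(s-1)}_n)^T$. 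This requires one more continuation step, computed exactly as for $f^{(s)}_n=-y_s$, using $f^{(s-1)}_{n-1}=0$ and $f^{(s-1)}_n=-y_{s-1}$; it gives $f^{(s-1)}_{n+1}=-a_{s+n-1}\,y_{s-1}/y_{s+n-1}=-a_{s-1}$ up to the periodic index. I expect this to show that $M_s$ sends a second independent vector to its negative, hence $M_s=-I$.

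Once $M_s=-I$ for all $s$, the conjugation relation becomes $-A_s=-A_{s+n}$, so $A_{s+n}=A_s$. Comparing $(1,1)$ entries and using $y_{s+n}=y_s$ gives $a_{s+n}=a_s$.

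The delicate bookkeeping is the extra continuation step $f_{n+1}$, and the care needed to justify extending the recurrence beyond the strip. I would handle it by defining the extension through the matrices, rather than through frieze entries.
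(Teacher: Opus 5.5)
\textbf{There is a genuine gap, at the step $c_s=0$.} Your reduction to $a_{i+n}=a_i$, the triangular form $M_s=\begin{pmatrix}-1&c_s\\0&-1\end{pmatrix}$, the relation $M_{s+1}A_s=A_{s+n}M_s$, and the final passage from $M_s=-I$ to $A_{s+n}=A_s$ all match the paper. The problem is the step you identify as the main obstacle: the argument you propose for $c_s=0$ is circular.

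Carried out honestly, the continuation step gives $f^{(s-1)}_{n+1}=-a_{s+n-1}$ (because $y_{s+n-1}=y_{s-1}$). What you actually obtain is therefore
\[
 M_s\binom{a_{s-1}}{y_{s-1}}=-\binom{a_{s+n-1}}{y_{s-1}}.
\]
Replacing $a_{s+n-1}$ by $a_{s-1}$ ``up to the periodic index'' assumes exactly the periodicity you are trying to prove. Combined with the triangular form of $M_s$, this identity yields only
\[
 c_s\,y_{s-1}=a_{s-1}-a_{s+n-1},
\]
which restates the problem rather than solving it. Nor can the frieze pin this value down independently: $f^{(s-1)}_{n+1}$ would be $W_{s-1,s+n+1}$, which lies outside the strip.

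\textbf{How to close it.} The missing idea is already in your third paragraph: you noted that $M_{s+1}$ also fixes $(1,0)^T$ with eigenvalue $-1$, but you never fed this into the conjugation relation. Apply both sides of $M_{s+1}A_s=A_{s+n}M_s$ to $(0,1)^T$ and compare second coordinates.
\begin{itemize}
\item On the left, $A_s(0,1)^T=(-y_{s+1}/y_s,\,0)^T$, and $M_{s+1}$ sends this to $(y_{s+1}/y_s,\,0)^T$. The second coordinate is $0$.
\item On the right, $M_s(0,1)^T=(c_s,-1)^T$. Since the second row of $A_{s+n}$ is $(1,0)$, the second coordinate is $c_s$.
\end{itemize}
Hence $c_s=0$. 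This is precisely the paper's argument, phrased there as follows: the lower-left entry of $A_{s+n}P_sA_s^{-1}$ equals $-c_sy_s/y_{s+1}$, and it must vanish because $P_{s+1}$ is upper triangular. With this replacement the rest of your proof goes through. The extra continuation $f^{(s-1)}_{n+1}$ is then not needed; only the one-step continuation $f^{(s)}_n=-y_s$, for each $s$, is used.
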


\begin{proof}
For each $s\in\Z$, set
\begin{equation*}\label{eq:Ps}
 P_s=A_{s+n-1}A_{s+n-2}\cdots A_s.
\end{equation*}
By \cref{prop:recurrence},
\[
 P_s\binom{y_s}{0}=\binom{-y_s}{0}.
\]
Hence the first column of $P_s$ is $(-1,0)^t$. Moreover, by \eqref{eq:detA} and the $n$-periodicity of the boundary decorations,
\[
 \det P_s
 =\prod_{r=0}^{n-1}\frac{y_{s+r+1}}{y_{s+r}}
 =1.
\]
Therefore
\begin{equation}\label{eq:triangular-P}
 P_s=
 \begin{pmatrix}-1&c_s\\0&-1\end{pmatrix}
\end{equation}
for some $c_s\in\cR_m$.

The products for two consecutive starting points are related by
\begin{equation}\label{eq:conjugation}
 P_{s+1}=A_{s+n}P_sA_s^{-1}.
\end{equation}
Since $y_{s+n}=y_s$ and $y_{s+n+1}=y_{s+1}$, a direct multiplication shows that the lower-left entry of the right-hand side of \eqref{eq:conjugation} is
$-c_s y_s/y_{s+1}$.
But $P_{s+1}$ also has the triangular form \eqref{eq:triangular-P}, so this lower-left entry must vanish. Since the $y_i$ are units, $c_s=0$. Thus
$P_s=-I$ for every $s\in\Z$.
Using this and \eqref{eq:conjugation} again, we obtain
\[
 -I=P_{s+1}=A_{s+n}(-I)A_s^{-1},
\]
so
$A_{s+n}=A_s$.
The boundary part of these matrices is already $n$-periodic; comparing the upper-left entries gives
$a_{s+n}=a_s$.

Finally, each diagonal is uniquely generated from its boundary initial data by \eqref{eq:general-rec}. Since both the boundary decorations and the quiddity row are $n$-periodic, shifting all indices by $n$ gives the same recurrence with the same initial data. Hence $W_{i+n,j+n}=W_{i,j}$ for every entry.
\end{proof}

\begin{corollary}
%[Least period in the formal setting]
\label{cor:least-period}
For the formal decorated friezes considered here, with algebraically independent and pairwise distinct boundary variables $y_1,\ldots,y_n$, the least positive horizontal period is exactly $n$.
\end{corollary}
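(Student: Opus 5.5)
By \cref{thm:periodicity}, $n$ is already a period. It remains to show that no $p$ with $0<p<n$ is a period of the formal frieze. The plan is to read off a contradiction from the first nonzero boundary row alone, so that no interior entries, quiddity computations or Laurent information are needed.

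Suppose $0<p<n$ is a period, meaning $W_{i+p,j+p}=W_{i,j}$ for all entries of the strip. Specializing to the first nonzero boundary row $W_{i,i+1}$ and using (D2), we get
\[
 y_{i+p}=W_{i+p,i+p+1}=W_{i,i+1}=y_i\qquad(i\in\Z).
\]
Take $i=1$. Then $y_{1+p}=y_1$, where $2\le 1+p\le n$, so $y_{1+p}$ is one of the variables $y_2,\ldots,y_n$ and is distinct from $y_1$. Pairwise distinctness of the boundary variables (which follows from algebraic independence) is contradicted, so $p$ cannot be a period. Hence the least positive period is exactly $n$.

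There is one subtle point: what exactly "period" means. If a period were understood only as periodicity of the interior entries, or of the quiddity row $a_i=W_{i,i+2}$, the boundary argument would not apply directly. In that case I would fall back on the diamond relation \eqref{eq:diamond} at $j=i+2$:
\[
 a_ia_{i+1}-y_{i+1}W_{i,i+3}=y_iy_{i+2}.
\]
If the interior entries were $p$-periodic, comparing this identity with its shift by $p$ gives $y_iy_{i+2}=y_{i+p}y_{i+2+p}$ for all $i$. This relation among the boundary monomials is impossible for $0<p<n$ by algebraic independence, since the two monomials $y_iy_{i+2}$ and $y_{i+p}y_{i+p+2}$ involve different index sets modulo $n$ unless $p\equiv0$. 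The case $n=3$ ($m=0$) needs a separate look: there the strip has no interior entries, so only the boundary rows carry information, and the first argument suffices. Under the paper's definition, where the period acts on all of $W_{i,j}$, the first argument already finishes the proof, and I expect no genuine obstacle beyond stating the convention clearly.
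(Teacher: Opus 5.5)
Your main argument is correct and is exactly the paper's proof. A period $p$ acting on the whole strip forces $y_{i+p}=y_i$ on the upper boundary row, distinctness of $y_1,\ldots,y_n$ rules out $0<p<n$, and \cref{thm:periodicity} supplies $n$ as a period.

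You should, however, drop the fallback paragraph, because it is wrong on two counts.

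First, shifting the diamond relation by $p$ also moves the boundary factor $y_{i+1}$ in front of $W_{i,i+3}$. So interior $p$-periodicity (for $n\ge5$) only yields
\[
 (y_{i+p+1}-y_{i+1})\,W_{i,i+3}=y_iy_{i+2}-y_{i+p}y_{i+p+2},
\]
not $y_iy_{i+2}=y_{i+p}y_{i+p+2}$.

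Second, and more seriously, the claim the fallback is meant to support is false for $n=4$. There $W_{i,i+3}=y_{i-1}$ is a boundary entry, and the only interior row is the quiddity row. \cref{thm:glide} gives $a_i=W_{i,i+2}=W_{i+2,i+4}=a_{i+2}$, so the interior entries are $2$-periodic. For instance, $a_1=x$ and $a_2=(y_1y_3+y_2y_4)/x$. Consistently, $y_iy_{i+2}=y_{i+2}y_{i+4}$ when $n=4$, so your ``different index sets modulo $n$'' claim fails for $p=2$.

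Thus the boundary rows are genuinely needed for the least-period statement. The convention that a period acts on the entire strip, as in the paper, is essential rather than a matter of taste.
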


\begin{proof}
If $p>0$ were a horizontal period, then the upper boundary row would satisfy $y_{i+p}=y_i$ for every $i$. Since the $y_1,\ldots,y_n$ are distinct independent variables and are extended with period $n$, this is possible only when $n$ divides $p$. By \cref{thm:periodicity}, $n$ itself is a period.
\end{proof}

\begin{remark}\label{rem:period-specialization}
The least-period assertion is specific to the formal labeled setting. After specializing some boundary variables, a smaller period may occur. This is already familiar in the classical theory: a closed frieze associated with an $n$-gon always has $n$ as a period, but its least period can be smaller when the corresponding triangulation has rotational symmetry. Thus the invariant statement parallel to classical (CC1) is the first assertion of \cref{thm:periodicity}.
\end{remark}

\subsection{Determinant representation and glide-reflection symmetry}\label{subsec:glide}
The preceding proof deliberately establishes periodicity first. We now show that the same closed decorated frieze has a stronger symmetry. The argument is independent of positivity and of the polygonal classification.

Choose two vectors $u_0,u_1\in\cR_m^2$ such that
\[
 \det(u_0,u_1)=y_0,
\]
where $y_0=y_n$, and extend them in both directions by the vector recurrence
\begin{equation}\label{eq:vector-recurrence}
 u_{i+2}=\frac{a_i}{y_i}u_{i+1}-\frac{y_{i+1}}{y_i}u_i
 \qquad(i\in\Z).
\end{equation}
Since every $y_i$ is a unit in $\cR_m$, this determines a bi-infinite sequence of vectors.

\begin{proposition}[Determinant representation]\label[proposition]{prop:det-representation}
For all $i,j\in\Z$ with $0\le j-i\le n$,
\begin{equation*}\label{eq:det-representation}
 W_{i,j}=\det(u_i,u_j).
\end{equation*}
In particular,
\[
 \det(u_i,u_{i+1})=y_i,
 \qquad
 \det(u_i,u_{i+2})=a_i.
\]
\end{proposition}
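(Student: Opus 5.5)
The plan is to show that the function $(i,j)\mapsto\det(u_i,u_j)$ satisfies the same boundary data and the same second-order recurrence as the frieze diagonals. Then \cref{prop:recurrence}, together with uniqueness of solutions of a second-order recurrence with given initial values, forces equality.

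\textbf{Step 1: consecutive determinants.} Show $\det(u_i,u_{i+1})=y_i$ for all $i\in\Z$ by induction in both directions from the normalization $\det(u_0,u_1)=y_0$. Going forward, use bilinearity and alternation with \eqref{eq:vector-recurrence}:
\[
\det(u_{i+1},u_{i+2})=\frac{a_i}{y_i}\det(u_{i+1},u_{i+1})-\frac{y_{i+1}}{y_i}\det(u_{i+1},u_i)=\frac{y_{i+1}}{y_i}\,\det(u_i,u_{i+1}).
\]
So $\det(u_i,u_{i+1})=y_i$ implies $\det(u_{i+1},u_{i+2})=y_{i+1}$. The same identity, read backwards, gives the downward induction, since all $y_i$ are units in $\cR_m$.

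\textbf{Step 2: each diagonal satisfies the frieze recurrence.} Fix $s\in\Z$ and put $g^{(s)}_r=\det(u_s,u_{s+r+1})$ for $r\ge -1$. Then $g^{(s)}_{-1}=\det(u_s,u_s)=0$, and $g^{(s)}_0=y_s$ by Step 1. Applying $\det(u_s,\cdot)$ to \eqref{eq:vector-recurrence} with $i=s+r$ gives
\[
g^{(s)}_{r+1}=\frac{a_{s+r}}{y_{s+r}}\,g^{(s)}_r-\frac{y_{s+r+1}}{y_{s+r}}\,g^{(s)}_{r-1}\qquad(r\ge0).
\]
This is exactly \eqref{eq:general-rec}.

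\textbf{Step 3: comparison with the frieze.} By \cref{prop:recurrence}, the diagonal $f^{(s)}$ satisfies the same recurrence for $0\le r\le n-2$, with the same initial values $f^{(s)}_{-1}=0$ and $f^{(s)}_0=y_s$. A second-order recurrence is determined by its two initial values, so induction on $r$ gives $g^{(s)}_r=f^{(s)}_r$ for $-1\le r\le n-1$. In terms of entries, this is $W_{s,j}=\det(u_s,u_j)$ for $s\le j\le s+n$; the case $j=s$ is $0=0$. Since $s$ is arbitrary, the identity holds on the whole strip. The particular cases are then immediate: $\det(u_i,u_{i+1})=y_i$ is Step 1, and $\det(u_i,u_{i+2})=a_i$ is the case $r=1$, or directly $\frac{a_i}{y_i}\,y_i$.

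I expect no serious obstacle. The only point needing care is that \eqref{eq:vector-recurrence} is defined using the globally given quiddity row $a_i$ and the bi-infinite sequence of $y_i$, so the recurrence for $g^{(s)}$ has exactly the coefficients of \eqref{eq:general-rec} for every $s$. Also, the range $0\le r\le n-2$ in \cref{prop:recurrence} is precisely what is needed to reach the lower zero row $r=n-1$, i.e.\ $j=s+n$.
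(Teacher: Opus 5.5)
Your proposal is correct and follows the same route as the paper. You obtain $\det(u_i,u_{i+1})=y_i$ from the vector recurrence, show that $j\mapsto\det(u_i,u_j)$ satisfies the decorated difference equation of \cref{prop:recurrence} with initial values $0$ and $y_i$, and conclude by uniqueness; your explicit backward induction in Step 1 fills in a point the paper leaves implicit.
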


\begin{proof}
Starting from $\det(u_0,u_1)=y_0$, recurrence \eqref{eq:vector-recurrence} gives
\[
 \det(u_{i+1},u_{i+2})
 =-\frac{y_{i+1}}{y_i}\det(u_{i+1},u_i)
 =y_{i+1}.
\]
Thus $\det(u_i,u_{i+1})=y_i$ for every $i$, and then
\[
 \det(u_i,u_{i+2})
 =\frac{a_i}{y_i}\det(u_i,u_{i+1})
 =a_i.
\]
Fix $i$ and put $V_{i,j}=\det(u_i,u_j)$. As a function of $j$, the sequence $V_{i,j}$ satisfies exactly the decorated difference equation of \cref{prop:recurrence}, with initial values
\[
 V_{i,i}=0,
 \qquad
 V_{i,i+1}=y_i.
\]
The same is true for $W_{i,j}$. Uniqueness for the second-order recurrence therefore gives $V_{i,j}=W_{i,j}$ throughout the closed strip.
\end{proof}

The determinant representation immediately upgrades the local diamond relation to the full Ptolemy relation.

\begin{corollary}[Ptolemy relation]\label[corollary]{cor:full-ptolemy}
For any $i<j<k<\ell$ for which all entries involved lie in the closed strip,
\begin{equation}\label{eq:full-ptolemy}
 W_{i,k}W_{j,\ell}
 =W_{i,j}W_{k,\ell}+W_{i,\ell}W_{j,k}.
\end{equation}
\end{corollary}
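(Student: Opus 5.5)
The plan is to deduce \eqref{eq:full-ptolemy} from the determinant representation of \cref{prop:det-representation} and the three-term Plücker identity for $2\times2$ determinants. In $\cR_m^2$ (indeed over any commutative ring), any four vectors $u,v,w,z$ satisfy
\[
 \det(u,w)\det(v,z)=\det(u,v)\det(w,z)+\det(u,z)\det(v,w).
\]
I will verify this once by a direct expansion, or more conceptually as follows. Since $\det(u,\cdot)$ is linear, the map $z\mapsto\det(u,w)\det(v,z)-\det(u,v)\det(w,z)-\det(u,z)\det(v,w)$ is a linear functional. This functional vanishes at $z=u$, $z=v$ and $z=w$ by antisymmetry. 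The same holds as a function of each other argument, so the verification reduces to a polynomial identity in eight coordinates that can be checked by expanding.

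Next, I specialize the Plücker identity to $u=u_i$, $v=u_j$, $w=u_k$, $z=u_\ell$. Here $u_i,u_j,u_k,u_\ell$ are the vectors produced by the recurrence \eqref{eq:vector-recurrence}, which is defined for all indices in $\Z$. This gives
\[
 \det(u_i,u_k)\det(u_j,u_\ell)=\det(u_i,u_j)\det(u_k,u_\ell)+\det(u_i,u_\ell)\det(u_j,u_k).
\]
The hypothesis that all entries involved lie in the closed strip means that each of the six index pairs $(i,j),(i,k),(i,\ell),(j,k),(j,\ell),(k,\ell)$ satisfies $0\le (\text{second})-(\text{first})\le n$. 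For $i<j<k<\ell$, this amounts to the single condition $\ell-i\le n$. \cref{prop:det-representation} then replaces each determinant by the corresponding $W$-entry, which yields \eqref{eq:full-ptolemy}.

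The only point needing care is that \cref{prop:det-representation} is stated exactly for pairs with $0\le j-i\le n$. I must therefore check that every pair used is in that range and not merely the outer pair. This follows from the ordering $i<j<k<\ell$ together with $\ell-i\le n$. No positivity or nonvanishing is needed beyond what was already used to construct the $u_i$, namely that the $y_i$ are units. I do not expect a real obstacle; the substance lies entirely in the determinant representation already established.
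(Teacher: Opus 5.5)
Your proposal is correct and matches the paper's proof, which applies the two-dimensional Pl\"ucker identity to $u_i,u_j,u_k,u_\ell$ and then invokes the determinant representation; your check that $\ell-i\le n$ puts all six index pairs inside the strip is a detail the paper leaves implicit. One small caveat: your ``conceptual'' argument, that the linear functional vanishes at $u,v,w$, only forces it to vanish identically when $u,v$ are generically independent, so the direct expansion you fall back on is the argument that actually closes this step.
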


\begin{proof}
Apply the Pl\"ucker identity in dimension two,
\[
 \det(u_i,u_k)\det(u_j,u_\ell)
 =\det(u_i,u_j)\det(u_k,u_\ell)
  +\det(u_i,u_\ell)\det(u_j,u_k),
\]
and use \cref{prop:det-representation}.
\end{proof}

\begin{theorem}[Glide-reflection symmetry]\label{thm:glide}
Let $\cF$ be a closed decorated frieze of width $m$ and put $n=m+3$. Then
\begin{equation}\label{eq:glide}
 W_{i,j}=W_{j,i+n}
\end{equation}
for every $i,j\in\Z$ with $0\le j-i\le n$.
Equivalently, the index transformation
\[
 G(i,j)=(j,i+n)
\]
leaves the decorated frieze invariant. In the usual planar display of a frieze, $G$ is a glide reflection.
\end{theorem}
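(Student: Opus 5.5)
We will prove $W_{i,j}=W_{j,i+n}$ from the determinant representation of \cref{prop:det-representation} together with the closing data. The key claim is that the vector sequence $(u_i)$ is antiperiodic: $u_{i+n}=-u_i$ for all $i\in\Z$. Granting this, for $0\le j-i\le n$ we have $0\le (i+n)-j\le n$, so the pair $(j,i+n)$ lies in the strip. We then compute
\[
 W_{j,i+n}=\det(u_j,u_{i+n})=-\det(u_j,u_i)=\det(u_i,u_j)=W_{i,j}.
\]
Since $G$ sends $(i,j)$ to $(j,i+n)$, it swaps the roles of the two indices and shifts by $n$. Geometrically this reflects the strip about its middle row and translates it by half a period, which is a glide reflection.

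To prove antiperiodicity, we use the transfer matrices. The recurrence \eqref{eq:vector-recurrence} says that the $2\times 2$ matrix $(u_{i+1}\ u_i)$, whose columns are $u_{i+1}$ and $u_i$, is transformed by right multiplication: $(u_{i+2}\ u_{i+1})=(u_{i+1}\ u_i)A_i^{t}$. Iterating from $i=s$ to $s+n-1$ gives
\[
 (u_{s+n+1}\ u_{s+n})=(u_{s+1}\ u_s)\,P_s^{t}.
\]
In the proof of \cref{thm:periodicity} we showed $P_s=-I$, hence $u_{s+n}=-u_s$ and $u_{s+n+1}=-u_{s+1}$. For this step we must check that the matrix identity governing $(u_i)$ is the same product $P_s$ as for the diagonals, and that $P_s=-I$ was established for every $s$. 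Both hold: the coefficients in \eqref{eq:vector-recurrence} are exactly those of \eqref{eq:general-rec}, and $P_s=-I$ was derived for all $s$.

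As a cross-check avoiding matrices, we can argue directly. Fix $j$. The scalar sequence $V_{j,k}=\det(u_j,u_k)$ satisfies the recurrence in $k$, with $V_{j,j}=0$ and $V_{j,j+1}=y_j$. \Cref{prop:recurrence} gives $V_{j,j+n}=W_{j,j+n}=0$ and the continued value $V_{j,j+n+1}=f^{(j)}_n=-y_j$. Therefore the shifted sequence $k\mapsto\det(u_j,u_{k+n})$ and the sequence $k\mapsto-\det(u_j,u_k)$ satisfy the same recurrence, because the quiddity and $\by$ are $n$-periodic by \cref{thm:periodicity}. They also share the initial values $0$ and $-y_j$ at $k=j,j+1$, so they coincide. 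This gives $\det(u_j,u_{i+n})=-\det(u_j,u_i)$ for all $i$, which is exactly what the computation above needs.

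The only delicate point is the bookkeeping of indices at the edge of the strip. The continued value $f^{(s)}_n=-y_s$ lies one step outside the strip, so we should use it only inside the determinant formalism, where the recurrence is valid for all indices. Apart from this, the argument is routine.
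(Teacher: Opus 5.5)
Your proof is correct, but it is not the argument the paper gives for \cref{thm:glide}. Your route is the determinant interpretation that the paper records afterwards in \cref{rem:vector-antiperiodic}.

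The paper never uses antiperiodicity of the vector lift. It applies the full Ptolemy relation of \cref{cor:full-ptolemy} (the Pl\"ucker identity) to the four indices $i<j<i+n-1<i+n$. The closing rows $W_{i,i+n}=0$ and $W_{i,i+n-1}=W_{i+n-1,i+n}=y_{i-1}$ reduce this to $y_{i-1}W_{j,i+n}=y_{i-1}W_{i,j}$, and then the unit $y_{i-1}$ is cancelled. The cases $j=i$, $j=i+n-1$ and $j=i+n$ are treated separately from the boundary conditions.

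The main difference is logical dependence. The Ptolemy proof uses only the determinant representation and the boundary data. It does not use $P_s=-I$ or $a_{i+n}=a_i$, and this is what allows the paper to note in \cref{rem:glide-implies-period} that $n$-periodicity can be recovered from the glide relation.

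Your proof depends on \cref{thm:periodicity}. The transfer-matrix version uses $P_s=-I$, and your matrix-free cross-check uses $n$-periodicity of the quiddity. So your argument cannot be used to rederive periodicity. In exchange, you obtain the stronger structural fact $u_{i+n}=-u_i$. The glide relation then follows in one line, uniformly for all $0\le j-i\le n$ with no separate boundary cases. It also shows that periodicity and glide symmetry are two aspects of the same antiperiodicity.

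Your transfer identity $(u_{i+2}\ u_{i+1})=(u_{i+1}\ u_i)A_i^{t}$, the matching of initial values $0$ and $-y_j$ in the cross-check, and the check that $(j,i+n)$ stays in the strip are all correct.
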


\begin{proof}
The cases $j=i$, $j=i+n-1$, and $j=i+n$ follow directly from the boundary conditions. Assume
$i<j<i+n-1$.
Apply \cref{cor:full-ptolemy} to the four indices
\[
 i<j<i+n-1<i+n.
\]
We obtain
\[
 W_{i,i+n-1}W_{j,i+n}
 =W_{i,j}W_{i+n-1,i+n}
  +W_{i,i+n}W_{j,i+n-1}.
\]
By closedness,
\[
 W_{i,i+n}=0,
 \qquad
 W_{i,i+n-1}=y_{i-1},
 \qquad
 W_{i+n-1,i+n}=y_{i+n-1}=y_{i-1}.
\]
Hence
\[
 y_{i-1}W_{j,i+n}=y_{i-1}W_{i,j}.
\]
Since $y_{i-1}$ is a unit in $\cR_m$, cancellation gives \eqref{eq:glide}.
\end{proof}

\begin{remark}
%[Anti-periodic vector lift]
\label{rem:vector-antiperiodic}
In the determinant model, the transfer-matrix identity $P_s=-I$ 
%from \eqref{eq:minusI} 
is equivalent to
$u_{i+n}=-u_i$ $(i\in\Z)$.
Thus the glide relation also has the compact determinant interpretation
\[
 W_{j,i+n}=\det(u_j,u_{i+n})
 =-\det(u_j,u_i)
 =\det(u_i,u_j)=W_{i,j}.
\]
This makes explicit the relation between the anti-periodicity of the underlying second-order system and the glide-reflection symmetry of the frieze.
\end{remark}

%\begin{figure}[h]
%\centering
%\resizebox{0.97\textwidth}{!}{%
%\begin{tikzpicture}[x=1.4cm,y=0.62cm,font=\small]
%\def\rows{
%  {$y_1$, $y_2$, $y_3$, $\cdots$, $\cdots$, $\cdots$, $y_{n-1}$, $y_n$}, 
%  {$W_{1,3}$, $W_{2,4}$, $W_{3,5}$, $\cdots$, $\cdots$, $W_{n-2,n}$, $W_{n-1,1}$, $W_{n,2}$},
%  {$W_{1,4}$, $W_{2,5}$, $W_{3,6}$, $\cdots$, $W_{n-3,n}$, $W_{n-2,1}$, $W_{n-1,2}$, $W_{n,3}$}, 
%  {$\ddots$, $\ $, $\ $, $\iddots$, $\iddots$, $\ $, $\ $, $\ddots$},
%  {$\ddots$, $\ $, $\iddots$, $\iddots$, $\ $, $\ $, $\ $, $\ddots$},
%  {$W_{1,n-1}$, $W_{2,n}$, $W_{3,1}$, $\cdots$, $\cdots$, $\cdots$, $W_{n-1,n-3}$, $W_{n,n-2}$},
%  {$y_n$, $y_1$, $y_2$, $\cdots$, $\cdots$, $\cdots$, $y_{n-2}$, $y_{n-1}$},
%}
%\foreach \row [count=\r from 0] in \rows {
%  \foreach \val [count=\c from 0] in \row {
%    \node at ({\c+0.5*\r},{-\r}) {\val};
%  }
%}
%\end{tikzpicture}}
%\caption{The glide reflection and the fundamental domain.}
%\label{fig:frieze}
%\end{figure}

\begin{figure}[h]
\centering
\resizebox{0.97\textwidth}{!}{%
\begin{tikzpicture}[x=1.4cm,y=0.62cm,font=\small]
\def\rows{
  {$y_1$, $y_2$, $y_3$, $\cdots$, $\cdots$, $\cdots$, $y_{n-1}$, $y_n$}, 
  {$W_{1,3}$, $W_{2,4}$, $W_{3,5}$, $\cdots$, $\cdots$, $W_{n-2,n}$, $W_{n-1,1}$, $W_{n,2}$},
  {$W_{1,4}$, $W_{2,5}$, $W_{3,6}$, $\cdots$, $W_{n-3,n}$, $W_{n-2,1}$, $W_{n-1,2}$, $W_{n,3}$}, 
  {$\ddots$, $\ $, $\ $, $\iddots$, $\iddots$, $\ $, $\ $, $\ddots$},
  {$W_{1,n-2}$, $\ $, $W_{3,n}$, $W_{4,1}$, $\ $, $\ $, $\ $, $W_{n,n-3}$},
  {$W_{1,n-1}$, $W_{2,n}$, $W_{3,1}$, $\cdots$, $\cdots$, $\cdots$, $W_{n-1,n-3}$, $W_{n,n-2}$},
  {$y_n$, $y_1$, $y_2$, $\cdots$, $\cdots$, $\cdots$, $y_{n-2}$, $y_{n-1}$},
}
\foreach \row [count=\r from 0] in \rows {
  \foreach \val [count=\c from 0] in \row {
    % 各ノードに名前に 「n-行番号-列番号」 という名前を自動付与します
    \node (n-\r-\c) at ({\c+0.5*\r},{-\r}) {\val};
  }
}

% --- 背景に三角形を配置 ---
\begin{scope}[on background layer]
    \filldraw[
        fill=yellow!30,       
        draw=orange,          
        thick,                
        rounded corners=6pt   % 大きくした分、角の丸みも少し増やすと綺麗です
    ] 
    % 各頂点を外側に 3mm〜5mm ほどずらします
    ([shift={(-4mm, 1mm)}]n-0-0.north west) -- 
    ([shift={(5mm, 1mm)}]n-0-6.north east) -- 
    ([shift={(0mm, -3mm)}]n-6-0.south) -- cycle;
%% --- 背景に三角形を配置 ---
%\begin{scope}[on background layer]
%    \filldraw[
%        fill=yellow!30,       % 内部の塗りつぶし色（薄い黄色）
%        draw=orange,          % 囲み線の色
%        thick,                % 線の太さ
%        rounded corners=4pt   % 角を少し丸くして綺麗にする
%    ] 
%    % (n-0-0) が上段1番目の y_1
%    % (n-0-6) が上段7番目の y_{n-1}
%    % (n-6-0) が下段1番目の y_n (最下段は \r=6)
%    (n-0-0.north west) -- (n-0-6.north east) -- (n-6-0.south) -- cycle;
\end{scope}

\end{tikzpicture}}
\caption{The glide-reflection symmetry and the fundamental domain.}
\label{fig:symmetry}
\end{figure}

\begin{corollary}[Triangular fundamental domain]\label[corollary]{cor:fundamental-domain}
Every closed decorated frieze of width $m$ is completely determined by the finite triangular collection
\begin{equation*}\label{eq:fundamental-domain}
 \mathscr D_n=\{W_{i,j}:1\le i<j\le n\}.
\end{equation*}
Indeed, every nonzero entry of the infinite strip is carried to a unique entry of $\mathscr D_n$ by iterating the glide reflection $G$ (equivalently, by using $n$-periodicity together with \eqref{eq:glide}).
\end{corollary}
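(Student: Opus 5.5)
The plan is to compose the two symmetries already established: $n$-periodicity from \cref{thm:periodicity}, $W_{i+n,j+n}=W_{i,j}$, and the glide relation \eqref{eq:glide}, $W_{i,j}=W_{j,i+n}$. Together they generate a group of index transformations preserving $\cF$. Note that $G^2(i,j)=(i+n,j+n)$, so periodicity is itself recovered from the glide reflection. I will show that every index pair $(i,j)$ with $0<j-i<n$, which covers all potentially nonzero entries, is carried into the range $1\le i<j\le n$ of $\mathscr D_n$. The zero rows $j=i$ and $j=i+n$ are fixed by (D1) and need no data.

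The first step uses periodicity alone. Given $(i,j)$ with $0<j-i<n$, translate by a multiple of $n$ so that $1\le i\le n$. If $j\le n$, we are already in $\mathscr D_n$.

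Otherwise $n<j<i+n$. Apply $G$ to get $(j,i+n)$, then translate by $-n$ to obtain $(j-n,i)$. This pair satisfies $1\le j-n<i\le n$, so it lies in $\mathscr D_n$. Hence $W_{i,j}=W_{j-n,i}$ with $(j-n,i)\in\mathscr D_n$. Since this translate of $G$ is explicit, the entry of $\mathscr D_n$ is produced constructively.

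For uniqueness, I observe that the orbit of $(i,j)$ under the group generated by $G$ consists of the pairs $(i+kn,j+kn)$ and $(j+kn,i+(k+1)n)$. In each pair the difference of coordinates is either $j-i$ or $n-(j-i)$. Membership in $\mathscr D_n$ forces both coordinates into $[1,n]$. A translate $(i+kn,j+kn)$ and a glided pair $(j+k'n,i+(k'+1)n)$ could both land there only if the integers $i$ and $j$ were congruent modulo $n$ in a compatible way, which is impossible since $0<j-i<n$. I will verify the remaining residue bookkeeping directly; the upshot is that each orbit meets $\mathscr D_n$ in exactly one element.

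Finally, I will remark that the boundary rows $W_{i,i+1}=y_i$ and $W_{i,i+n-1}=y_{i-1}$ are compatible with this identification. For example, $(i,i+n-1)$ maps to $(i-1,i)$, which gives $y_{i-1}$, consistent with (D2). There is no real obstacle here. The only care needed is the index bookkeeping in the uniqueness claim, and even that is inessential for the "determined by" conclusion, which needs only existence of a representative in $\mathscr D_n$.
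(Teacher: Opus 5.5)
Your proposal is correct and follows the paper's proof: the existence step (translate so that $1\le i\le n$, and if $j>n$ use $W_{i,j}=W_{j,i+n}=W_{j-n,i}$) is identical, and your uniqueness check is the paper's observation that a $G$-orbit modulo $G^2$ is determined by the unordered residue pair $\{i,j\}$ modulo $n$. To make ``congruent modulo $n$ in a compatible way'' precise, note that coordinates in $[1,n]$ must be the canonical residue representatives $\bar i,\bar j$, so a translate lies in $\mathscr D_n$ only if $\bar i<\bar j$ and a glided pair only if $\bar j<\bar i$; since $\bar i\neq\bar j$, exactly one of these holds, which gives existence and uniqueness together.
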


\begin{proof}
Let $W_{i,j}$ be nonzero, so $1\le j-i\le n-1$. By \cref{thm:periodicity}, translate both indices by a multiple of $n$ so that $1\le i\le n$. If then $j\le n$, the entry belongs to $\mathscr D_n$. If $j>n$, the glide relation gives
\[
 W_{i,j}=W_{j,i+n}=W_{j-n,i},
\]
where the last equality uses $n$-periodicity. Since $1\le j-n<i\le n$, this lies in $\mathscr D_n$. The reverse reconstruction is immediate from periodicity and the glide relation. Uniqueness of the representative follows because, modulo the translation $G^2(i,j)=(i+n,j+n)$, a $G$-orbit is determined by the unordered pair of residue classes $\{i,j\}$ modulo $n$, and $\mathscr D_n$ contains exactly one ordered representative $1\le i<j\le n$ of each such pair.
\end{proof}

\begin{remark}
%[Glide symmetry also implies periodicity]
\label{rem:glide-implies-period}
We have intentionally proved periodicity first, by the transfer-matrix argument of \cref{thm:periodicity}, and only afterwards established the stronger glide-reflection symmetry. Nevertheless, the logical implication can also be reversed once \eqref{eq:glide} is known: since
\[
 G^2(i,j)=(i+n,j+n),
\]
applying the glide reflection twice gives
\[
 W_{i+n,j+n}=W_{i,j}.
\]
Thus $n$-periodicity is also an immediate consequence of glide-reflection symmetry.
\end{remark}

\begin{remark}
%[Comparison with the classical theory]
\label{rem:classical-glide}
For classical positive integral friezes, glide-reflection symmetry can be obtained from the Broline--Crowe--Isaacs geometric interpretation of all frieze entries \cite{BCI}; it is also one of the standard algebraic symmetries in Coxeter's theory. The proof above shows that the same symmetry persists for closed decorated friezes before any positivity, normalization, or triangulation theorem is imposed. Thus the decorated theory retains not only the classical periodicity (CC1), but also the associated glide-reflection structure and its finite fundamental domain.
\end{remark}

\section{Positive Laurent decorated friezes and the monomial skeleton}\label{sec:positive}
From now on, let the set of positive Laurent polynomials be
\[
 \cL_m=
 \Z_{\geq0}[x_1^{\pm1},\ldots,x_m^{\pm1},y_1^{\pm1},\ldots,y_n^{\pm1}]\setminus\{0\}.
\]

\begin{definition}\label{def:positive-normalized}
A closed decorated frieze of width $m$ is called a \emph{positive Laurent decorated frieze} 
if every nonzero entry belongs to $\cL_m$. By \cref{thm:periodicity}, it is automatically $n$-periodic.

Such a frieze is \emph{normalized with respect to $\bx$} if
%, in the triangular fundamental domain $\mathscr D_n$ of \cref{cor:fundamental-domain},
\begin{enumerate}[label=\textup{(N\arabic*)},leftmargin=2.6em]
\item every interior Laurent monomial entry is one of $x_1,\ldots,x_m$;
\item every $x_k$ occurs as an interior entry.
\end{enumerate}
\end{definition}

\begin{remark}\label{rem:dummy}
Condition (N1) is the essential normalization. Condition (N2) merely excludes unused dummy variables and makes the bijection with triangulations whose $m$ diagonals are distinctly labeled by $x_1,\ldots,x_m$ literal. Once the classical skeleton is identified below, there are exactly $m$ diagonal positions, so (N1)--(N2) imply that each $x_k$ occurs exactly once in a fundamental region.
\end{remark}

Because periodicity has now been proved rather than assumed, the quiddity sequence may be written unambiguously as
\[
 \ba=(a_1,\ldots,a_n),\qquad a_i=W_{i,i+2}.
\]
We use the polygon convention that $W_{i,j}$ corresponds to the chord $[v_i,v_j]$. Consequently, $a_i$ is geometrically attached to the intermediate vertex $v_{i+1}$.

%For later use, write the first two diagonals in one period as
%\[
% f_{-1}=0,\qquad f_0=y_1,\qquad f_r=W_{1,r+2}\ (1\le r\le m),
% \qquad f_{m+1}=y_n,
%\]
%and
%\[
% g_{-1}=0,\qquad g_0=y_2,\qquad g_r=W_{2,r+3}\ (1\le r\le m),
% \qquad g_{m+1}=y_1.
%\]
%They satisfy the shifted forms of \eqref{eq:general-rec}.

%\begin{figure}[t]
%\centering
%\resizebox{0.97\textwidth}{!}{%
%\begin{tikzpicture}[x=1.15cm,y=0.62cm,font=\small]
%\def\rows{
%  {$y_1$, $y_2$, $y_3$, $\cdots$, $y_{i-1}$, $y_i$, $y_{i+1}$, $y_{i+2}$, $\cdots$, $y_{n-1}$, $y_n$, $y_1$},
%  {$a_1$, $a_2$, $\cdots$, $a_{i-2}$, $a_{i-1}$, $a_i$, $a_{i+1}$, $\cdots$, $a_{n-2}$, $a_{n-1}$, $a_n$},
%  {$f_2$, $g_2$, $\cdots$, $\iddots$, $\cdots$, $\ddots$, $\cdots$},
%  {$\ddots$, $\ $, $\ $},
%  {$f_{i-2}$, $\iddots$},
%  {$f_{i-1}$, $\iddots$},
%  {$f_i$},
%  {$\ddots$},
%  {$\ $},
%  {$f_m$, $g_m$, $h_m$, $\cdots$},
%  {$y_n$, $y_1$, $y_2$, $\cdots$},
%}
%\foreach \row [count=\r from 0] in \rows {
%  \foreach \val [count=\c from 0] in \row {
%    \node at ({\c+0.5*\r},{-\r}) {\val};
%  }
%}
%\end{tikzpicture}}
%\caption{The arrangement of a positive Laurent decorated frieze after periodicity has been established. The second displayed row is the quiddity sequence. The sequences $f,g,h,\ldots$ run along the diagonals.}
%\label{fig:frieze}
%\end{figure}

For a positive Laurent decorated frieze $\cF(\bx,\by)=(W_{i,j})$ with independent variables $\bx$ and $\by$,
we define the specialization map
\[
 \ev(\cF(\bx,\by))=\cF({\bf 1},{\bf 1}),\qquad {\bf 1}=(1\ldots,1).
\]
%The coefficients of $\ev(\cF(\by))$ is also denoted by $\ev(W_{i,j})$.
Applying $\ev$ entrywise to a positive Laurent decorated frieze, which is also denoted by $\ev(W_{i,j})$,
 turns \eqref{eq:diamond} into the ordinary unimodular rule. By \cref{thm:periodicity}, the result is a classical positive integral closed frieze of width $m$ and period $n=m+3$. The Conway--Coxeter theorem therefore determines a unique triangulation of the labeled $n$-gon. Denote it by $\Delta(\cF)$.

\begin{lemma}[Monomial skeleton]\label[lemma]{lem:skeleton}
Let $\cF$ be a normalized positive Laurent decorated frieze. In the fundamental domain $\mathscr D_n$, 
an interior entry $W_{i,j}$ is a Laurent monomial if and only if the chord $[v_i,v_j]$ is a diagonal of $\Delta(\cF)$. In that case $W_{i,j}=x_k$ for a unique $k$.
\end{lemma}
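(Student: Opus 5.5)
The plan is to reduce everything to the classical frieze $\ev(\cF)$ via a positivity observation on $\cL_m$, and then use the classical description of the entries equal to $1$.

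\emph{Step 1 (positivity).} For $P\in\cL_m$, write $P=\sum_\alpha c_\alpha \bx^{\alpha}\by^{\beta}$ with all $c_\alpha\in\Z_{\geq0}$ and at least one $c_\alpha>0$. Then $\ev(P)=\sum_\alpha c_\alpha\geq1$. Moreover, $\ev(P)=1$ holds if and only if exactly one coefficient is nonzero and that coefficient equals $1$, i.e.\ if and only if $P$ is a Laurent monomial with coefficient $1$. Every nonzero entry of $\cF$ lies in $\cL_m$, so this criterion applies to every interior entry $W_{i,j}$.

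\emph{Step 2 (classical input).} We use the classical fact that in the positive integral frieze $\ev(\cF)$ with triangulation $\Delta(\cF)$, an interior entry $\ev(W_{i,j})$ with $1\le i<j\le n$ equals $1$ if and only if $[v_i,v_j]$ is a diagonal of $\Delta(\cF)$. This uses the same chord convention $W_{i,j}\leftrightarrow[v_i,v_j]$ under which $a_i$ is attached to $v_{i+1}$. One could cite Broline--Crowe--Isaacs \cite{BCI}, but a self-contained argument is available.

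\begin{itemize}
\item[(a)] \emph{Diagonals give $1$.} Induct on $n$ by cutting an ear (a quiddity entry $1$) in the classical Conway--Coxeter reduction. Removing an ear deletes one V-shaped strip and leaves the other entries unchanged. A given diagonal survives as a side of some smaller polygon, and in the base case it is a boundary edge, whose entry is $1$.
\item[(b)] \emph{Non-diagonals are at least $2$.} Suppose $[v_i,v_j]$ is not a diagonal. Then it crosses some diagonal $[v_k,v_\ell]$ of $\Delta(\cF)$, and by (a) $\ev(W_{k,\ell})=1$. The specialized Ptolemy relation \cref{cor:full-ptolemy}, together with the glide symmetry \cref{thm:glide} to put the four indices in cyclic order, gives $\ev(W_{i,j})=\ev(W_{i,j})\,\ev(W_{k,\ell})$. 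The right-hand side is a sum of two products of positive integers, hence $\ev(W_{i,j})\geq2$.
\end{itemize}

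\emph{Step 3 (both implications).}
\begin{itemize}
\item Suppose $W_{i,j}$ is an interior Laurent monomial. By (N1) it equals some $x_k$, so $\ev(W_{i,j})=1$. By Step 2 the chord $[v_i,v_j]$ is a diagonal of $\Delta(\cF)$.
\item Conversely, suppose $[v_i,v_j]$ is a diagonal. By Step 2, $\ev(W_{i,j})=1$, so by Step 1 $W_{i,j}$ is a Laurent monomial. By (N1) it is then some $x_k$.
\end{itemize}
The index $k$ is unique because $x_1,\ldots,x_m$ are distinct variables.

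The main obstacle is Step 2. It must be established cleanly within the paper's conventions, in particular the chord labeling and the cyclic ordering needed to apply \cref{cor:full-ptolemy} to crossing chords. Steps 1 and 3 are formal.
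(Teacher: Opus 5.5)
Your proposal is correct and follows the paper's proof: both use the coefficient-sum argument for $\ev$ on $\cL_m$, the normalization (N1), and the classical fact that the interior $1$'s of $\ev(\cF)$ sit exactly on the diagonals of $\Delta(\cF)$. The paper cites this classical fact rather than reproving it as your Step~2 does, and it also notes, via (N2) and the count of $m$ diagonals, that the $x_k$ are assigned to the diagonals bijectively.

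One slip: in Step~2(b) the identity should read $\ev(W_{i,j})=\ev(W_{i,j})\,\ev(W_{k,\ell})=\ev(W_{i,k})\,\ev(W_{j,\ell})+\ev(W_{i,\ell})\,\ev(W_{k,j})$ for the ordering $i<k<j<\ell$ (the other crossing pattern is symmetric). Since all four indices lie in $\{1,\ldots,n\}$, this is \cref{cor:full-ptolemy} applied directly, so the glide symmetry is not needed.
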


\begin{proof}
If $W_{i,j}$ is a monomial, normalization gives $W_{i,j}=x_k$, hence $\ev(W_{i,j})=1$. In the classical Conway--Coxeter labeling attached to a triangulation, the interior entries equal to $1$ occur precisely at the triangulation diagonals. Therefore $[v_i,v_j]$ belongs to $\Delta(\cF)$.

Conversely, suppose $[v_i,v_j]$ is a diagonal of $\Delta(\cF)$. Then $\ev(W_{i,j})=1$. Since $W_{i,j}$ is a Laurent polynomial with nonnegative integral coefficients, its value at $(1,\ldots,1)$ is the sum of its coefficients. This sum can be $1$ only when $W_{i,j}$ consists of a single Laurent monomial with coefficient $1$. By normalization it is one of the $x_k$. Since there are exactly $m$ diagonals and all $x_k$ occur, the assignment of the $x_k$ to the diagonals is bijective.
\end{proof}

\begin{corollary}
%[Monomial quiddity entries]
\label[corollary]{cor:monomial}
If $m\geq1$, the quiddity sequence contains at least two entries which are variables $x_k$.
\end{corollary}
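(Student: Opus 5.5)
To prove the corollary, I will pass to the classical frieze $\ev(\cF)$, use the Conway--Coxeter theorem to locate two ears of $\Delta(\cF)$, and then read off the corresponding quiddity entries through \cref{lem:skeleton}.

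\emph{Step 1: two ears.} Let $m\geq1$, so $n\geq4$. The triangulation $\Delta(\cF)$ of the convex $n$-gon has $n-2\geq2$ triangles. By the standard ear lemma, it has at least two ears, that is, triangles with two sides on the boundary. Such a triangle has the form $[v_i,v_{i+1},v_{i+2}]$ with indices taken modulo $n$. Its third side $[v_i,v_{i+2}]$ is not a boundary edge, because $n\geq4$. Hence this side is an internal diagonal of $\Delta(\cF)$.

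\emph{Step 2: distinct ears give distinct entries.} Two distinct ears have distinct middle vertices $v_{i+1}$, so they give distinct indices $i$ modulo $n$. For each such $i$, the entry $a_i=W_{i,i+2}$ sits on a diagonal of $\Delta(\cF)$.

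\emph{Step 3: apply the lemma.} We must move the entry into the fundamental domain $\mathscr D_n$. If $i+2\leq n$, this is immediate. Otherwise, i.e. for $i=n-1$ or $i=n$, we use periodicity and the glide relation as in \cref{cor:fundamental-domain}. This rewrites $W_{i,i+2}$ as $W_{i',j'}$ with $1\leq i'<j'\leq n$ and $\{i',j'\}\equiv\{i,i+2\}$ modulo $n$. The chord is the same, so it is still a diagonal of $\Delta(\cF)$. By \cref{lem:skeleton}, this entry equals some $x_k$.

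\emph{Step 4: different variables.} The two ears give two distinct diagonals. The lemma says the assignment of the $x_k$ to diagonals is bijective, so the two entries are distinct variables. This gives the corollary. It also matches the classical fact that a quiddity entry equal to $1$ marks an ear.

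The only step needing care is the index bookkeeping in Step 3. Since the lemma is stated inside $\mathscr D_n$, the wrap-around indices $i=n-1$ and $i=n$ have to be moved there by the glide relation; everything else is direct.
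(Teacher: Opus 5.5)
Steps 1--3 are exactly the paper's argument: at least two ears exist by \cite[Lemma~1]{BCI}, the chord opposite each tip is a diagonal of $\Delta(\cF)$, and \cref{lem:skeleton} makes the corresponding quiddity entry some $x_k$; your fundamental-domain bookkeeping for $i=n-1,n$ makes explicit what the paper leaves implicit. Step 4, however, is unnecessary and false for $m=1$: the two ears of a triangulated quadrilateral share its unique diagonal, so both entries equal $x_1$ (distinct ears have distinct opposite diagonals only when $n\geq5$), and the corollary claims only two entries, not two distinct variables.
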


\begin{proof}
Every triangulated $n$-gon with $n>3$ has at least two ears \cite[Lemma~1]{BCI}. If an ear has tip $v_{i+1}$, its opposite chord $[v_i,v_{i+2}]$ is a triangulation diagonal. The corresponding quiddity entry is $a_i=W_{i,i+2}$, so \cref{lem:skeleton} gives $a_i=x_k$.
\end{proof}

A weaker form, sufficient for induction, can also be obtained without first identifying the whole skeleton. If all variables are specialized to $1$, the classical quiddity contains an entry $1$; positivity then forces the corresponding decorated quiddity entry to be a monomial, and normalization makes it one of the $x_k$. This is the decorated replacement for the classical fact that a positive closed quiddity contains a $1$.

\section{Weighted triangulations and a geometric quiddity formula}\label{sec:geometry}
Let $\Delta$ be an $(\bx,\by)$-weighted triangulation of the labeled convex $n$-gon $P_n$. Thus the boundary edge $[v_i,v_{i+1}]$ has weight $y_i$, while the $m=n-3$ diagonals of $\Delta$ are labeled bijectively by $x_1,\ldots,x_m$.

For a vertex $v$, let $E_v$ be the set of all triangulation edges incident to $v$, and let $D_v\subset E_v$ be the set of internal diagonals incident to $v$. For a triangle $T$ containing $v$, denote by $E_v(T)$ the two sides of $T$ incident to $v$ and 
by $\operatorname{opp}_v(T)$ the side of $T$ opposite $v$.

\begin{definition}\label{def:Q}
Define
\begin{equation}\label{eq:geometric-quiddity}
 Q_\Delta(v)=
 \frac{
 \displaystyle\sum_{\substack{T\in\Delta,\ v\in T}}
 \wt\bigl(\operatorname{opp}_v(T)\bigr)
 \displaystyle\prod_{e\in E_v\setminus E_v(T)}\wt(e)
 }{
 \displaystyle\prod_{d\in D_v}\wt(d)
 },
\end{equation}
where $\wt(\cdot)$ is the weight on an edge.
The empty product is assumed to be $1$. The \emph{geometric quiddity sequence} is indexed according to the frieze convention by
\begin{equation}\label{eq:Q-index}
 a_i=Q_\Delta(v_{i+1}).
\end{equation}
\end{definition}

If all weights are $1$, each summand in the numerator is $1$ and the denominator is $1$, so $Q_\Delta(v)$ is the number of triangles incident to $v$. Thus \eqref{eq:geometric-quiddity} is a direct weighted extension of the classical quiddity rule.

\begin{figure}[h]
    \centering
    \begin{tikzpicture}[scale=1.2]
        % マーカーのスタイル定義
        \tikzset{
            marker/.style={circle, draw, inner sep=0.5pt, minimum size=10pt, font=\scriptsize\bfseries},
            red_m/.style={marker, draw=red, text=red},
            blue_m/.style={marker, draw=blue, text=blue},
            green_m/.style={marker, draw=green!60!black, text=green!60!black},
            black_m/.style={marker, draw=black, text=black}
        }

        % 七角形の頂点座標 (v1~v7)
        \foreach \i in {1,2,3,4,5,6,7} {
            \coordinate (v\i) at ({90 + 360/7 - (\i-1)*360/7}:2);
        }

        % --- 辺と対角線の描画 ---

        % 外周 (y)
        \draw (v1) -- node[above left] {$y_1$} (v2);
        \draw (v2) -- node[above right] {$y_2$} (v3);
        \draw (v3) -- node[right] {$y_3$} (v4);
        \draw (v4) -- node[below right] {$y_4$} (v5);
        \draw (v5) -- node[below] {$y_5$} (v6);
        \draw (v6) -- node[below left] {$y_6$} (v7);
        \draw (v7) -- node[left] {$y_7$} (v1);
        
        % v5-v7 (x2)
        \draw (v5) -- node[below] {$x_2$} (v7);
        \draw (v5) -- (v6) -- (v7);

        % 対角線 from v2 (x)
        \draw (v2) -- node[pos=0.4, right] {$x_1$} (v7);
        \draw (v2) -- node[pos=0.4, left] {$x_3$} (v5);
        \draw (v2) -- node[pos=0.5, left] {$x_4$} (v4);

        % --- 頂点の描画 ---
        \foreach \i in {1,2,3,4,5,7} {
            \fill (v\i) circle (2pt) node[label={90 + 360/7 - (\i-1)*360/7:$v_{\i}$}] {};
        }
        %\fill[gray] (v6) circle (1.5pt) node[label=below:$v_6$] {};
        \fill (v6) circle (2pt) node[label=below:$v_6$] {};

        % --- マーカーの配置 (calcライブラリ必須) ---

        % Edge y1 (v1-v2): \UTF{2460}, \UTF{2462}, \UTF{2463}%
        \node[red_m] at ($(v1)!0.3!(v2) + (-0.15, 0)$) {1};
        \node[green_m] at ($(v1)!0.5!(v2) + (-0.15, 0)$) {3};
        \node[black_m] at ($(v1)!0.7!(v2) + (-0.15, 0)$) {4};

        % Edge y2 (v2-v3): \UTF{2460}, \UTF{2461}, \UTF{2462}%
        \node[red_m] at ($(v2)!0.3!(v3) + (0.15, -0.1)$) {1};
        \node[blue_m] at ($(v2)!0.5!(v3) + (0.15, -0.1)$) {2};
        \node[green_m] at ($(v2)!0.7!(v3) + (0.15, -0.1)$) {3};

        % Edge y3 (v3-v4): \UTF{2463}%
        \node[black_m] at ($(v3)!0.5!(v4) + (0, 0.4)$) {4};

        % Edge y4 (v4-v5): \UTF{2462}%
        \node[green_m] at ($(v4)!0.5!(v5) + (0.15, 0.25)$) {3};

        % Diagonal x2 (v5-v7): \UTF{2460}%
        \node[red_m] at ($(v5)!0.5!(v7) + (-0.3,0.1)$) {1};

        % Edge y7 (v7-v1): \UTF{2461}%
        \node[blue_m] at ($(v7)!0.5!(v1) + (0.1, 0.2)$) {2};

        % Diagonal x1 (v2-v7): \UTF{2462}, \UTF{2463}%
        \node[green_m] at ($(v2)!0.7!(v7) + (0.1, 0.25)$) {3};
        \node[black_m] at ($(v2)!0.6!(v7) + (0.1, 0.25)$) {4};

        % Diagonal x3 (v2-v5): \UTF{2461}, \UTF{2463}%
        \node[blue_m] at ($(v2)!0.7!(v5) + (-0.1, 0.1)$) {2};
        \node[black_m] at ($(v2)!0.6!(v5) + (-0.1, 0.1)$) {4};

        % Diagonal x4 (v2-v4): \UTF{2460}, \UTF{2461}%
        \node[red_m] at ($(v2)!0.8!(v4) + (-0.2, 0.1)$) {1};
        \node[blue_m] at ($(v2)!0.7!(v4) + (-0.2, 0.1)$) {2};

    \end{tikzpicture}
    \caption{The formula $a_1=Q_\Delta(v_2)$ for vertex $v_2$. For each circled number, take the product of the weights of edges
    where the number is put. Then sum all such products and divide it by $x_1x_3x_4$.}
    \label{fig:numerator_composition}
\end{figure}

\begin{example}
Let $\Delta$ be the weighted triangulation of the labeled convex heptagon as in Figure \ref{fig:numerator_composition}.
The first geometric quiddity is 
\[
a_1=Q_\Delta(v_2)= \frac{x_2 x_4 y_1 y_2 + x_3 x_4 y_2 y_7 + x_1 y_1 y_2 y_4 + x_1 x_3 y_1 y_3}{x_1 x_3 x_4}.
\]
This is chosen as follows. The denominator is the product
$\prod_{d\in D_{v_2}}\wt(d)$ of weights $x_1,x_3,x_4$ on the internal diagonals incident to $v_2$.
For the numerator, choose four triangles $T$ containing $v_2$, 
and the weights $\wt(\operatorname{opp}_v(T))$ on the side of $T$ opposite $v_2$. For example,
side $\operatorname{opp}_{v_2}(T)=[v_5,v_7]$ of triangle $T=[v_2,v_5,v_7]$ has weight $x_2$ 
marked by $\textcircled{\scriptsize 1}$.
For this fixed $T$, the product $\prod_{e\in E_v\setminus E_v(T)}\wt(e)$ is made of weights $x_4$, $y_1$, and $y_2$
also marked by $\textcircled{\scriptsize 1}$. For the other three triangles $T$, the same construction is
applied.
\end{example}

\begin{lemma}
%[Ear value]
\label[lemma]{lem:ear}
Suppose a triangle $[v_i,v_{i+1},v_{i+2}]$ is an ear with tip $v_{i+1}$ and opposite diagonal $[v_i,v_{i+2}]$ of weight $x$. Then
\[
 a_i=Q_\Delta(v_{i+1})=x.
\]
\end{lemma}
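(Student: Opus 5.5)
Since $[v_i,v_{i+1},v_{i+2}]$ is an ear with tip $v_{i+1}$, the diagonal $[v_i,v_{i+2}]$ belongs to $\Delta$ and cuts off a single triangle. So the only triangle of $\Delta$ containing $v_{i+1}$ is $T=[v_i,v_{i+1},v_{i+2}]$. The plan is to evaluate \eqref{eq:geometric-quiddity} directly at $v=v_{i+1}$ by identifying the sets $E_v$, $D_v$ and $E_v(T)$; this is a direct computation.

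First determine the edges at $v_{i+1}$. Any internal diagonal from $v_{i+1}$ would have to cross $[v_i,v_{i+2}]$, since $v_{i+1}$ lies alone on one side of that chord. Triangulation edges do not cross, so $D_{v_{i+1}}=\emptyset$ and the denominator is the empty product $1$. Hence $E_{v_{i+1}}$ consists only of the two boundary edges $[v_i,v_{i+1}]$ and $[v_{i+1},v_{i+2}]$, of weights $y_i$ and $y_{i+1}$.

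Next evaluate the numerator, which has exactly one summand, indexed by $T$. Here $E_{v_{i+1}}(T)$ is exactly these two boundary edges, so $E_{v_{i+1}}\setminus E_{v_{i+1}}(T)=\emptyset$ and the product over it equals $1$. The opposite side is $\operatorname{opp}_{v_{i+1}}(T)=[v_i,v_{i+2}]$, whose weight is $x$. Therefore $Q_\Delta(v_{i+1})=x$, and by the indexing convention \eqref{eq:Q-index} this value is $a_i$.

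The only step requiring care is the non-crossing argument that rules out further diagonals and triangles at $v_{i+1}$. One might also worry about the degenerate case $m=0$, where $[v_i,v_{i+2}]$ is a boundary edge. However, the hypothesis says that this side is a diagonal of weight $x$, which forces $m\ge1$, so the case does not arise.
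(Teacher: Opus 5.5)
Your proof is correct and follows the same route as the paper: evaluate \eqref{eq:geometric-quiddity} at the tip $v_{i+1}$, using that exactly one triangle is incident to it, no internal diagonal is incident to it, and the opposite side has weight $x$. The only difference is that you spell out the non-crossing argument for these facts, which the paper states without justification.
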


\begin{proof}
Exactly one triangle is incident to $v_{i+1}$, no internal diagonal is incident to the tip, and the side opposite the tip has weight $x$. Formula \eqref{eq:geometric-quiddity} gives the result.
\end{proof}

\begin{proposition}[Ear attachment]\label{prop:ear-attach}
Suppose that in a weighted triangulated $(n-1)$-gon the boundary edge $[v_i,v_{i+2}]$ has weight $x$. Attach the ear $[v_i,v_{i+1},v_{i+2}]$, with new boundary weights $y_i$ and $y_{i+1}$. If $\ba'$ and $\ba$ denote the geometric quiddity sequences 
given by \eqref{eq:Q-index}
before and after the attachment, then
\begin{align}
 a_{i-1}&=\frac{a_{i-1}'y_i+y_{i-1}y_{i+1}}{x},\label{eq:attach-left}\\
 a_i&=x,\label{eq:attach-mid}\\
 a_{i+1}&=\frac{a_i'y_{i+1}+y_i y_{i+2}}{x},\label{eq:attach-right}
\end{align}
and all other entries are unchanged up to the index shift.
\end{proposition}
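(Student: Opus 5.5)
The proof is a local computation with formula \eqref{eq:geometric-quiddity}. Attaching the ear changes the star of only three vertices: the new tip $v_{i+1}$, and the two endpoints $v_i$ and $v_{i+2}$ of the old boundary edge $[v_i,v_{i+2}]$. Every other vertex keeps the same incident triangles, the same incident edges with the same weights, and the same incident diagonals. Its $Q_\Delta$ is therefore literally unchanged, which gives the last assertion once the indices are shifted. For the tip, \cref{lem:ear} gives \eqref{eq:attach-mid} directly.

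For $v_i$ (which gives $a_{i-1}=Q_\Delta(v_i)$), I will compare the stars before and after. Before attachment, $E'_{v_i}$ consists of the old edges, including the boundary edge $[v_i,v_{i+2}]$ of weight $x$, and $D'_{v_i}$ consists of the old diagonals. After attachment, $[v_i,v_{i+2}]$ becomes a diagonal, so $D_{v_i}=D'_{v_i}\cup\{[v_i,v_{i+2}]\}$. The new edge $[v_i,v_{i+1}]$ of weight $y_i$ is added, so $E_{v_i}=E'_{v_i}\cup\{[v_i,v_{i+1}]\}$. The denominator therefore gains exactly the factor $x$.

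In the numerator, each old triangle $T\ni v_i$ contributes its old summand multiplied by the extra factor $y_i$, since $[v_i,v_{i+1}]\notin E_{v_i}(T)$. Summing these over the old triangles gives $y_i\cdot(\text{old numerator})$. The new triangle $T_0=[v_i,v_{i+1},v_{i+2}]$ has $\operatorname{opp}_{v_i}(T_0)=[v_{i+1},v_{i+2}]$ of weight $y_{i+1}$. Its complementary edges are $E'_{v_i}\setminus\{[v_i,v_{i+2}]\}$, so it contributes $y_{i+1}\prod_{e\in E'_{v_i}}\wt(e)/x$.

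Dividing by $x\prod_{d\in D'_{v_i}}\wt(d)$ gives
\[
\frac{y_i a'_{i-1}}{x}+\frac{y_{i+1}}{x^{2}}\cdot\frac{\prod_{e\in E'_{v_i}}\wt(e)}{\prod_{d\in D'_{v_i}}\wt(d)}.
\]
In the last fraction, the two boundary edges of the old polygon at $v_i$ survive and all diagonal weights cancel. These boundary edges are $[v_{i-1},v_i]$ of weight $y_{i-1}$ and $[v_i,v_{i+2}]$ of weight $x$, so the fraction equals $y_{i-1}x$. This turns the second term into $y_{i-1}y_{i+1}/x$, which is \eqref{eq:attach-left}.

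The vertex $v_{i+2}$ is handled symmetrically. Here the added edge is $[v_{i+1},v_{i+2}]$ of weight $y_{i+1}$, and the opposite side of the new triangle is $[v_i,v_{i+1}]$ of weight $y_i$. The surviving boundary edges are $x$ and $[v_{i+2},v_{i+3}]$ of weight $y_{i+2}$. Using the index shift $a'_i=Q'(v_{i+2})$, this yields \eqref{eq:attach-right}.

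The main point needing care is the bookkeeping of the index shift: old vertex labels $v_{i+2},\dots$ become $v_{i+3},\dots$ after insertion, and the old boundary weights must be identified with the $y$'s appropriately. The other point is the cancellation identity $\prod_{E'}\wt/\prod_{D'}\wt=(\text{product of the two boundary weights at }v)$, which holds because $E'_v\setminus D'_v$ is exactly the pair of boundary edges at $v$.
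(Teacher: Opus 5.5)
Your proposal is correct and follows essentially the same route as the paper. Only the stars of $v_i,v_{i+1},v_{i+2}$ change, the denominator gains the factor $x$, each old numerator term gains the new boundary weight, and the ear contributes one new term; the only cosmetic difference is that you write the ear term as $y_{i+1}\prod_{e\in E'_{v_i}}\wt(e)/x$ and cancel via $\prod_{e\in E'_{v_i}}\wt(e)/\prod_{d\in D'_{v_i}}\wt(d)=y_{i-1}x$, whereas the paper reads it off directly as $y_{i-1}y_{i+1}\prod_{d\in D'_{v_i}}\wt(d)$.
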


\begin{figure}[h]
\centering
\begin{tikzpicture}[scale=0.93,every node/.style={font=\small}]

%------------------------------------------------
% Before attachment
%------------------------------------------------
\begin{scope}
\coordinate (L0) at (0.6,0);
\coordinate (A)  at (0.5,1);
\coordinate (B)  at (1.2,2.2);
\coordinate (D)  at (3.5,2.2);
\coordinate (E)  at (4.2,1);
\coordinate (R0) at (4,0);

% Partial boundary of the polygon
\draw (L0)--(A)--(B)--(D)--(E)--(R0);

% Vertex labels: indices increase clockwise
\fill (A) circle (2pt) node[label=below left:$v_{i-1}$] {};
\fill (B) circle (2pt) node[label=above left:$v_{i}$] {};
\fill (D) circle (2pt) node[label=above right:$v_{i+2}$] {};
\fill (E) circle (2pt) node[label=below right:$v_{i+3}$] {};
%\node[below left] at (A) {$v_{i-1}$};
%\node[above left] at (B) {$v_i$};
%\node[above right] at (D) {$v_{i+2}$};
%\node[below right] at (E) {$v_{i+3}$};

% Edge labels
\node[left] at ($(A)!0.5!(B)$) {$y_{i-1}$};
\node[above] at ($(B)!0.5!(D)$) {$x$};
\node[right] at ($(D)!0.5!(E)$) {$y_{i+2}$};

\node[below] at (2.40,-0.55)
{before attachment};
\end{scope}

%------------------------------------------------
% Arrow
%------------------------------------------------
\draw[-{Latex[length=2.5mm]},thick]
      (6.15,0.85)--(7.35,0.85);
\node[above] at (6.75,0.85) {attach an ear};

%------------------------------------------------
% After attachment
%------------------------------------------------
\begin{scope}[xshift=8.6cm]
\coordinate (L1) at (0.6,0);
\coordinate (A1) at (0.5,1);
\coordinate (B1) at (1.2,2.2);
\coordinate (C1) at (2.2,2.8);
\coordinate (D1) at (3.5,2.2);;
\coordinate (E1) at (4.2,1);
\coordinate (R1) at (4,0);

% Partial boundary after the ear is attached
\draw (L1)--(A1)--(B1)--(C1)--(D1)--(E1)--(R1);

% The old boundary edge becomes an internal diagonal
\draw[thick] (B1)--(D1);

% Vertex labels: indices increase clockwise
\fill (A1) circle (2pt) node[label=below left:$v_{i-1}$] {};
\fill (B1) circle (2pt) node[label=above left:$v_{i}$] {};
\fill (C1) circle (2pt) node[label=above:$v_{i+1}$] {};
\fill (D1) circle (2pt) node[label=above right:$v_{i+2}$] {};
\fill (E1) circle (2pt) node[label=below right:$v_{i+3}$] {};
%\node[below left] at (A1) {$v_{i-1}$};
%\node[above left] at (B1) {$v_i$};
%\node[above] at (C1) {$v_{i+1}$};
%\node[above right] at (D1) {$v_{i+2}$};
%\node[below right] at (E1) {$v_{i+3}$};

% Edge labels
\node[left] at ($(A1)!0.5!(B1)$) {$y_{i-1}$};
\node[above left] at ($(B1)!0.5!(C1)$) {$y_i$};
\node[above right] at ($(C1)!0.5!(D1)$) {$y_{i+1}$};
\node[right] at ($(D1)!0.5!(E1)$) {$y_{i+2}$};
\node[below] at ($(B1)!0.5!(D1)$) {$x$};

\node[below] at (2.40,-0.55)
{after attachment};
\end{scope}

\end{tikzpicture}
\caption{Ear attachment.  Before the attachment, the edge
$[v_i,v_{i+2}]$ of weight $x$ is a boundary edge.
Attaching the new vertex $v_{i+1}$ with boundary weights
$y_i$ and $y_{i+1}$ turns this edge into an internal diagonal.}
\label{fig:ear-attachment}
\end{figure}

\begin{proof}
Let $\Delta'$ denote the triangulation before the ear is attached and let $\Delta$ denote the triangulation afterwards. Only the stars of the three vertices $v_i,v_{i+1},v_{i+2}$ change. The value at the new ear tip $v_{i+1}$ is immediately
$Q_\Delta(v_{i+1})=x$
by \cref{lem:ear}, which gives \eqref{eq:attach-mid}. We give the bookkeeping at the two endpoints of the new ear in detail.

First consider $v_i$. Write the numerator and denominator of the geometric formula for $\Delta'$ at $v_i$ as
\[
 N_i'
 =\sum_{\substack{T\in\Delta',\,v_i\in T}}
 \wt\bigl(\operatorname{opp}_{v_i}(T)\bigr)
 \prod_{e\in E_{v_i}(\Delta')\setminus E_{v_i}(T)}\wt(e),
 \qquad
 D_i'=\prod_{d\in D_{v_i}(\Delta')}\wt(d).
\]
Thus
\[
 a_{i-1}'=Q_{\Delta'}(v_i)=\frac{N_i'}{D_i'}.
\]
In $\Delta'$, the edge $[v_i,v_{i+2}]$ of weight $x$ is a boundary edge. After the ear is attached it becomes an internal diagonal, while the new boundary edge $[v_i,v_{i+1}]$ of weight $y_i$ is added to the star of $v_i$. Consequently the denominator acquires exactly one new factor:
\[
 D_i=xD_i'.
\]

We next determine the new numerator $N_i$. There are two kinds of contributions. Every triangle of $\Delta'$ incident to $v_i$ remains a triangle of $\Delta$. For each such old triangle, the new edge $[v_i,v_{i+1}]$ is incident to $v_i$ but is not a side of that triangle. Hence the product in its numerator term acquires the extra factor $y_i$. The total contribution of all old triangles is therefore
$y_iN_i'$.
There is in addition one new term, coming from the attached ear
$T_{\rm ear}=[v_i,v_{i+1},v_{i+2}]$.
At the vertex $v_i$, the side opposite $v_i$ is $[v_{i+1},v_{i+2}]$, of weight $y_{i+1}$. The two sides of $T_{\rm ear}$ incident to $v_i$ have weights $y_i$ and $x$. After these two edges are omitted from the incident-edge product in \eqref{eq:geometric-quiddity}, what remains is the old boundary edge $[v_{i-1},v_i]$ of weight $y_{i-1}$ together with all old diagonals incident to $v_i$. Hence the new ear contributes $y_{i+1}y_{i-1}D_i'$.
It follows that
\[
 N_i=y_iN_i'+y_{i+1}y_{i-1}D_i'.
\]

Dividing by $D_i=xD_i'$ gives
\[
 Q_\Delta(v_i)
 =\frac{y_iN_i'+y_{i+1}y_{i-1}D_i'}{xD_i'}
 =\frac{a_{i-1}'y_i+y_{i-1}y_{i+1}}{x},
\]
which is \eqref{eq:attach-left}.

The endpoint $v_{i+2}$ is handled in the same way.
%and we spell it out because the two terms in the update formula have a transparent geometric origin. 
Put
\[
 N_{i+2}'
 =\sum_{\substack{T\in\Delta',\,v_{i+2}\in T}}
 \wt\bigl(\operatorname{opp}_{v_{i+2}}(T)\bigr)
 \prod_{e\in E_{v_{i+2}}(\Delta')\setminus E_{v_{i+2}}(T)}\wt(e),
 \quad
 D_{i+2}'=\prod_{d\in D_{v_{i+2}}(\Delta')}\wt(d),
\]
so that $a_i'=Q_{\Delta'}(v_{i+2})=N_{i+2}'/D_{i+2}'$. Again the former boundary edge of weight $x$ becomes a diagonal, and therefore the new denominator is $xD_{i+2}'$. Every old numerator term acquires the new incident boundary weight $y_{i+1}$, giving $y_{i+1}N_{i+2}'$. For the new ear, the side opposite $v_{i+2}$ is $[v_i,v_{i+1}]$ of weight $y_i$, while after removing the two ear sides incident to $v_{i+2}$ the remaining incident edges consist of the old boundary edge $[v_{i+2},v_{i+3}]$ of weight $y_{i+2}$ and all old diagonals. Thus the new ear contributes $y_i y_{i+2}D_{i+2}'$. We obtain
\[
 Q_\Delta(v_{i+2})
 =\frac{y_{i+1}N_{i+2}'+y_i y_{i+2}D_{i+2}'}{xD_{i+2}'}
 =\frac{a_i'y_{i+1}+y_i y_{i+2}}{x},
\]
which is \eqref{eq:attach-right}.

Finally, the star of every other vertex is unchanged by the ear attachment, so its value in \eqref{eq:geometric-quiddity} is unchanged, apart from the evident relabeling of indices. This completes the proof.
\end{proof}

Solving \eqref{eq:attach-left} and \eqref{eq:attach-right} for the reduced values gives
\begin{equation}\label{eq:geo-reduction}
 a_{i-1}'=\frac{a_{i-1}x-y_{i-1}y_{i+1}}{y_i},
 \qquad
 a_i'=\frac{xa_{i+1}-y_i y_{i+2}}{y_{i+1}}.
\end{equation}
These are regarded as the formulae for ear removal.
These are exactly the decorated replacements for subtracting $1$ 
from the two neighbors of an ear entry in the classical quiddity sequence.

\section{Algebraic reduction of the decorated frieze}\label{sec:reduction}
Let $\cF$ be a normalized positive Laurent decorated frieze of width $m\geq1$. 
By \cref{cor:monomial}, after a cyclic relabeling we may suppose
\[
 a_i=x
\]
for one of the variables $x=x_k$.

Remove $x$ from the list of diagonal variables. Define the new boundary sequence $\by'=(y_1',\ldots,y_{n-1}')$ by
\begin{equation*}\label{eq:y-red}
 y_r'=\begin{cases}
 y_r,&1\leq r\leq i-1,\\
 x,&r=i,\\
 y_{r+1},&i+1\leq r\leq n-1,
 \end{cases}
\end{equation*}
and define the reduced quiddity sequence $\ba'=(a_1',\ldots,a_{n-1}')$ by
\begin{equation}\label{eq:a-red}
 a_r'=\begin{cases}
 a_r,&1\leq r\leq i-2,\\[1mm]
 \displaystyle\frac{a_{i-1}x-y_{i-1}y_{i+1}}{y_i},&r=i-1,\\[3mm]
 \displaystyle\frac{xa_{i+1}-y_i y_{i+2}}{y_{i+1}},&r=i,\\[3mm]
 a_{r+1},&i+1\leq r\leq n-1
 \end{cases}
\end{equation}
making use of \eqref{eq:geo-reduction}.
%This transformation is visualized in Figure \ref{fig:reduced}.

The transfer matrix used in the recurrence is
\[
 M(a;u,v)=
 \begin{pmatrix}
 a/u&-v/u\\
 1&0
 \end{pmatrix}.
\]

\begin{lemma}
%[Transfer-matrix identity]
\label[lemma]{lem:matrix}
With $a_i=x$ and $a_{i-1}',a_i'$ defined by \eqref{eq:a-red},
\[
 M(a_{i+1};y_{i+1},y_{i+2})
 M(x;y_i,y_{i+1})
 M(a_{i-1};y_{i-1},y_i)
 =
 M(a_i';x,y_{i+2})
 M(a_{i-1}';y_{i-1},x).
\]
%\begin{align}
%&M(a_{i+1};y_{i+1},y_{i+2})
% M(x;y_i,y_{i+1})
% M(a_{i-1};y_{i-1},y_i)\notag\\
%&\qquad=
% M(a_i';x,y_{i+2})
% M(a_{i-1}';y_{i-1},x).
%\label{eq:matrix-id}
%\end{align}
\end{lemma}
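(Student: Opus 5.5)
The identity is a finite computation with $2\times2$ matrices over the field of fractions of $\cR_m$. I would carry it out by first grouping the right-hand two factors of the triple product and then comparing the two sides entry by entry. No structure of the frieze is needed beyond the definitions \eqref{eq:a-red} of $a_{i-1}'$ and $a_i'$.

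\emph{Step 1.} Compute $M(x;y_i,y_{i+1})M(a_{i-1};y_{i-1},y_i)$. Its upper-left entry is
\[
\frac{xa_{i-1}-y_{i-1}y_{i+1}}{y_iy_{i-1}},
\]
and by \eqref{eq:a-red} this is exactly $a_{i-1}'/y_{i-1}$. So this product is
\[
\begin{pmatrix} a_{i-1}'/y_{i-1} & -x/y_{i-1}\\ a_{i-1}/y_{i-1} & -y_i/y_{i-1}\end{pmatrix}.
\]
This is where the ear value $a_i=x$ and the first reduction formula enter.

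\emph{Step 2.} Multiply on the left by $M(a_{i+1};y_{i+1},y_{i+2})$. The bottom row of the result is the top row of the Step 1 matrix, namely $(a_{i-1}'/y_{i-1},\,-x/y_{i-1})$. The top row is $a_{i+1}/y_{i+1}$ times the first row minus $y_{i+2}/y_{i+1}$ times the second row. The upper-right entry becomes
\[
-\frac{xa_{i+1}-y_iy_{i+2}}{y_{i+1}y_{i-1}}=-\frac{a_i'}{y_{i-1}}
\]
by the second formula in \eqref{eq:a-red}.

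\emph{Step 3.} Compute the right-hand side directly:
\[
M(a_i';x,y_{i+2})M(a_{i-1}';y_{i-1},x)=\begin{pmatrix} \dfrac{a_i'a_{i-1}'}{xy_{i-1}}-\dfrac{y_{i+2}}{x} & -\dfrac{a_i'}{y_{i-1}}\\[2mm] \dfrac{a_{i-1}'}{y_{i-1}} & -\dfrac{x}{y_{i-1}}\end{pmatrix}.
\]
Three of its entries visibly agree with those found in Step 2.

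\emph{Step 4.} It remains to match the upper-left entries. I would clear denominators by multiplying both by $xy_{i+1}y_{i-1}$ and substitute $y_{i+1}a_i'=xa_{i+1}-y_iy_{i+2}$. The difference of the two sides then reduces to
\[
y_{i+2}\bigl(y_ia_{i-1}'-xa_{i-1}+y_{i-1}y_{i+1}\bigr),
\]
which vanishes by the definition of $a_{i-1}'$.

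Alternatively, both sides have determinant $y_{i+2}/y_{i-1}$, by \eqref{eq:detA}-type multiplicativity of $\det M(a;u,v)=v/u$. This would force the last entry once the bottom-left entry $a_{i-1}'/y_{i-1}$ is known to be nonzero. The direct computation avoids having to justify that nonvanishing, so I would use it.

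The only real obstacle is bookkeeping in Step 4: making sure that both reduction formulas are substituted and that no hidden condition such as $a_{i-1}'\neq0$ is needed. The direct algebraic cancellation handles this.
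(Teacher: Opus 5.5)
Your proposal is correct and is essentially the paper's proof: you multiply out after grouping the right two factors, observe that three entries agree at once, and verify the $(1,1)$ entry by substituting the reduction formulas \eqref{eq:a-red}. Your substitution of only $y_{i+1}a_i'$, which leaves $y_{i+2}$ times the defining relation of $a_{i-1}'$, is a slightly tidier version of the paper's full expansion. One small slip in your aside: recovering the $(1,1)$ entry from the determinant needs the $(2,2)$ entry $-x/y_{i-1}$ to be nonzero, not the bottom-left one, and since $x$ is a unit that alternative route would need no extra hypothesis.
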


%\begin{proof}
%Substitute the two formulae in \eqref{eq:a-red} into the right-hand side and multiply the $2\times2$ matrices. The four entries agree with the product on the left. This is the decorated analogue of the elementary three-matrix-to-two-matrix identity used in the classical cutting proof at a quiddity entry $1$.
%\end{proof}

\begin{proof}
We verify the identity by multiplying the matrices explicitly.  The point of
the computation is that the product of the three transfer matrices around the
entry $a_i=x$ is exactly replaced by the product of the two transfer matrices
associated with the reduced data.

For brevity, put
\[
 L=
 M(a_{i+1};y_{i+1},y_{i+2})
 M(x;y_i,y_{i+1})
 M(a_{i-1};y_{i-1},y_i)
\]
and
\[
 R=
 M(a_i';x,y_{i+2})
 M(a_{i-1}';y_{i-1},x).
\]
We first compute $L$.  Multiplying the two matrices on the right gives
\begin{align*}
&M(x;y_i,y_{i+1})
 M(a_{i-1};y_{i-1},y_i)\\
&\qquad=
\begin{pmatrix}
\dfrac{x}{y_i} & -\dfrac{y_{i+1}}{y_i}\\[6pt]
1&0
\end{pmatrix}
\begin{pmatrix}
\dfrac{a_{i-1}}{y_{i-1}} & -\dfrac{y_i}{y_{i-1}}\\[6pt]
1&0
\end{pmatrix}
=
\begin{pmatrix}
\dfrac{x a_{i-1}-y_{i-1}y_{i+1}}
      {y_{i-1}y_i}
&
-\dfrac{x}{y_{i-1}}\\[8pt]
\dfrac{a_{i-1}}{y_{i-1}}
&
-\dfrac{y_i}{y_{i-1}}
\end{pmatrix}.
\end{align*}
Hence
\begin{equation}\label{eq:three-product}
\begin{aligned}
L
&=
\begin{pmatrix}
\dfrac{a_{i+1}}{y_{i+1}}
&
-\dfrac{y_{i+2}}{y_{i+1}}\\[6pt]
1&0
\end{pmatrix}
\begin{pmatrix}
\dfrac{x a_{i-1}-y_{i-1}y_{i+1}}
      {y_{i-1}y_i}
&
-\dfrac{x}{y_{i-1}}\\[8pt]
\dfrac{a_{i-1}}{y_{i-1}}
&
-\dfrac{y_i}{y_{i-1}}
\end{pmatrix}\\
&=
\begin{pmatrix}
\dfrac{
 a_{i+1}(x a_{i-1}-y_{i-1}y_{i+1})
 -y_i y_{i+2}a_{i-1}}
 {y_{i-1}y_i y_{i+1}}
&
\dfrac{-x a_{i+1}+y_i y_{i+2}}
 {y_{i-1}y_{i+1}}
\\[12pt]
\dfrac{x a_{i-1}-y_{i-1}y_{i+1}}
 {y_{i-1}y_i}
&
-\dfrac{x}{y_{i-1}}
\end{pmatrix}.
\end{aligned}
\end{equation}

We next compute the product for the reduced data. By definition,
\[
 a_{i-1}'
 =
 \frac{x a_{i-1}-y_{i-1}y_{i+1}}{y_i},
 \qquad
 a_i'
 =
 \frac{x a_{i+1}-y_i y_{i+2}}{y_{i+1}}.
\]
Therefore
\begin{equation}\label{eq:two-product}
R=
\begin{pmatrix}
\dfrac{a_i'}{x}
&
-\dfrac{y_{i+2}}{x}\\[6pt]
1&0
\end{pmatrix}
\begin{pmatrix}
\dfrac{a_{i-1}'}{y_{i-1}}
&
-\dfrac{x}{y_{i-1}}\\[6pt]
1&0
\end{pmatrix}
=
\begin{pmatrix}
\dfrac{a_i'a_{i-1}'-y_{i-1}y_{i+2}}
      {x y_{i-1}}
&
-\dfrac{a_i'}{y_{i-1}}\\[10pt]
\dfrac{a_{i-1}'}{y_{i-1}}
&
-\dfrac{x}{y_{i-1}}
\end{pmatrix}.
\end{equation}
We now compare the four entries of \eqref{eq:three-product} and \eqref{eq:two-product}.  The $(2,2)$
entries are already identical.  For the $(2,1)$ entry, substitution of
the definition of $a_{i-1}'$ gives
\[
 \frac{a_{i-1}'}{y_{i-1}}
 =
 \frac{x a_{i-1}-y_{i-1}y_{i+1}}
      {y_{i-1}y_i},
\]
which is exactly the $(2,1)$ entry of $L$.  Similarly,
\[
 -\frac{a_i'}{y_{i-1}}
 =
 \frac{-x a_{i+1}+y_i y_{i+2}}
      {y_{i-1}y_{i+1}},
\]
so the $(1,2)$ entries also agree.

It remains only to compare the $(1,1)$ entries.  Substituting the two
reduction formulae into the numerator of the $(1,1)$ entry of $R$, we
obtain
\begin{align*}
a_i'a_{i-1}'-y_{i-1}y_{i+2}
&=
\frac{(x a_{i+1}-y_i y_{i+2})
      (x a_{i-1}-y_{i-1}y_{i+1})}
     {y_i y_{i+1}}
-y_{i-1}y_{i+2}\\
&=
\frac{
 (x a_{i+1}-y_i y_{i+2})
 (x a_{i-1}-y_{i-1}y_{i+1})
 -y_{i-1}y_i y_{i+1}y_{i+2}}
 {y_i y_{i+1}}.
\end{align*}
Expanding the numerator, the last two terms cancel:
\begin{align*}
& (x a_{i+1}-y_i y_{i+2})
  (x a_{i-1}-y_{i-1}y_{i+1})
  -y_{i-1}y_i y_{i+1}y_{i+2}\\
&\qquad
=
x^2a_{i+1}a_{i-1}
-xa_{i+1}y_{i-1}y_{i+1}
-xa_{i-1}y_i y_{i+2}\\
&\qquad
=
x\bigl(
x a_{i+1}a_{i-1}
-a_{i+1}y_{i-1}y_{i+1}
-a_{i-1}y_i y_{i+2}
\bigr).
\end{align*}
Consequently,
\begin{align*}
\frac{a_i'a_{i-1}'-y_{i-1}y_{i+2}}
     {x y_{i-1}}
&=
\frac{
x a_{i+1}a_{i-1}
-a_{i+1}y_{i-1}y_{i+1}
-a_{i-1}y_i y_{i+2}}
{y_{i-1}y_i y_{i+1}}\\
&=
\frac{
a_{i+1}(x a_{i-1}-y_{i-1}y_{i+1})
-y_i y_{i+2}a_{i-1}}
{y_{i-1}y_i y_{i+1}},
\end{align*}
which is precisely the $(1,1)$ entry of $L$.

Thus all four entries of the two matrix products agree, and hence the statement is proved.
%\[
% M(a_{i+1};y_{i+1},y_{i+2})
% M(x;y_i,y_{i+1})
% M(a_{i-1};y_{i-1},y_i)
% =
% M(a_i';x,y_{i+2})
% M(a_{i-1}';y_{i-1},x).
%\]
\end{proof}

This explicit calculation shows that the algebraic reduction at the
monomial quiddity entry $a_i=x$ replaces three consecutive transfer
steps by two without changing their total action.

\begin{figure}[h]
\centering
\scalebox{0.90}{%
\begin{tikzpicture}[x=1.2cm,y=0.62cm,font=\large]

  \begin{scope}[on background layer]
    % 1. 左側の斜めライン (b_{i-2} -> a_{i-1} -> y_i)
    % 座標: (2+0.5*2, -2) -> (3+0.5*1, -1) -> (4+0.5*0, 0) つまり (3, -2) -> (3.5, -1) -> (4, 0)
    % 2. 右側の斜めライン (y_{i+1} -> a_{i+1} -> b_{i+1})
    % 座標: (5, 0) -> (5.5, -1) -> (6, -2)
    
    % line width（帯の太さ）を 0.7cm に設定し、角を丸く（line join=round）つなぎます。
    % これにより、文字幅に左右されず、数理的に完全な左右対称の「逆V字の帯」が作れます。
    % 3番目の引数 [red!20] などの部分で色を変更可能です。
    \draw[red!20, line width=0.72cm, line cap=round, line join=round] 
      (3.0, -2.0) -- (4.0, 0.0) -- (5.0, 0.0) -- (6.0, -2.0);
  \end{scope}

  % ==========================================
  % 1つ目の図（上側）
  % ==========================================
  \def\rowsTop{
    {$y_1$, $y_2$, $y_3$, $\cdots$, $y_{i}$, $y_{i+1}$, $\cdots$, $\cdots$, $y_{n}$, $y_1$},
    {$a_1$, $a_2$, $\cdots$, $a_{i-1}$, $x$, $a_{i+1}$, $\cdots$, $a_{n-1}$, $a_{n}$},
    {$\ $, $\cdots$, $b_{i-2}$, $b_{i-1}$, $b_i$, $b_{i+1}$, $\cdots$},
  }
  
  \foreach \row [count=\r from 0] in \rowsTop {
    \foreach \val [count=\c from 0] in \row {
      \node at ({\c+0.5*\r},{-\r}) {\val};
    }
  }

  % ==========================================
  % 中央の下向き矢印
  % ==========================================
  %\node[font=\Large] at (4.5, -3.5) {$\Downarrow$};
  \draw[-stealth, line width=1.5pt] (4.5, -3.0) -- (4.5, -4.5);

  % ==========================================
  % 2つ目の図（下側）
  % ==========================================
  \def\rowsBottom{
    {$\ $},
    {$y'_1$, $y'_2$, $\cdots$, $y'_{i-1}$, $x$, $y'_{i+2}$, $\cdots$, $y'_{n}$, $y'_1$},
    {$a'_1$, $\cdots$, $a'_{i-2}$, $b_{i-1}$, $b_i$, $a'_{i+2}$, $\cdots$, $a'_{n}$},
    {$\cdots$, $b'_{i-3}$, $\ $, $\ $, $\ $, $b'_{i+2}$, $\cdots$},
  }
  \foreach \row [count=\r from 0] in \rowsBottom {
    \foreach \val [count=\c from 0] in \row {
      \node at ({\c+0.5*\r},{-4.5-\r}) {\val};
    }
  }

\end{tikzpicture}%
}
\caption{The reduced decorated frieze after deleting the inverse $V$-strip determined by $a_i=x$. The old diagonal weight $x$ becomes the new boundary decoration $y_i'$. The new quiddity sequence contains $a'_{i-1}=b_{i-1}$ and $a'_i=b_i$. For the other letters, the primes do not change the entries.}
\label{fig:reduced}
\end{figure}

\begin{proposition}[Frieze reduction]\label[proposition]{prop:reduction}
The data $(\by',\ba')$ define a normalized positive Laurent decorated frieze $\cF'$ of width $m-1$. The array $\cF'$ is obtained from $\cF$ by deleting the inverse $V$-shaped strip issued from $a_i=x$ and closing the gap. On the first diagonal,
\begin{equation}\label{eq:shift}
 f_r'=f_r\quad(0\leq r\leq i-2),
 \qquad
 f_r'=f_{r+1}\quad(i-1\leq r\leq m),
\end{equation}
and analogous shifted identities hold on every diagonal.
\end{proposition}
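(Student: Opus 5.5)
The plan is to realize $\cF'$ concretely as a sub-array of $\cF$, so that no new recurrence solving is needed. After the cyclic relabeling with $a_i=W_{i,i+2}=x$, the strip to be deleted consists of all entries having $i+1$ as one of their indices; these are $W_{p,i+1}$ and $W_{i+1,q}$ and their translates by $n$. In polygon language, these are the chords through the ear tip $v_{i+1}$.

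Let $\varphi:\Z\to\Z\setminus(i+1+n\Z)$ be the order-preserving bijection with $\varphi(r)=r$ for $1\le r\le i$ and $\varphi(r)=r+1$ for $i+1\le r\le n-1$, extended by $\varphi(r+(n-1))=\varphi(r)+n$. I will define
\[
 W'_{p,q}=W_{\varphi(p),\varphi(q)}\qquad(0\le q-p\le n-1).
\]
On the first diagonal this gives \eqref{eq:shift} directly, and the analogous shifted identities on every other diagonal are then immediate from the definition. By \cref{thm:periodicity}, $\cF$ is $n$-periodic, so $\cF'$ is well defined and $(n-1)$-periodic.

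\textbf{Step 1 (the axioms (D1)--(D4) for $\cF'$).}
\begin{itemize}
\item For (D1): $W'_{p,p}=0$ is clear. For the other zero row, $\varphi(p+n-1)=\varphi(p)+n$, so $W'_{p,p+n-1}=W_{\varphi(p),\varphi(p)+n}=0$.
\item For the upper row of (D2): $W'_{p,p+1}$ is $y_{\varphi(p)}$ unless $\varphi(p)=i$, in which case it is $W_{i,i+2}=x$. This is exactly $y'_p$.
\item For the lower row of (D2): $W'_{p,p+n-2}=W_{\varphi(p),\varphi(p)+n-1}=y_{\varphi(p)-1}$ unless the skipped index lies between $\varphi(p)$ and $\varphi(p+n-2)$. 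In that case it equals $W_{i+2,i+n}$, which is $W_{i,i+2}=x$ by the glide relation \cref{thm:glide}. This matches $y'_{p-1}$.
\item For (D3): interior entries of $\cF'$ are interior entries of $\cF$, hence nonzero.
\item For the diamond rule (D4): put $P=\varphi(p)<P'=\varphi(p+1)$ and $Q=\varphi(q)<Q'=\varphi(q+1)$. The required identity is
\[
 W_{P,Q}W_{P',Q'}-W_{P',Q}W_{P,Q'}=W_{P,P'}W_{Q,Q'}.
\]
This is the full Ptolemy relation \cref{cor:full-ptolemy} applied to $P<P'\le Q<Q'$ (the case $P'=Q$ is trivial). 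The right-hand side is $y'_py'_q$ by the (D2) check above. Where the four indices are not in the window required by \cref{cor:full-ptolemy}, I will first move them there by periodicity and glide symmetry.
\end{itemize}

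\textbf{Step 2 (identifying the quiddity).} Apply Ptolemy to $i-1<i<i+1<i+2$:
\[
 a_{i-1}\,x=y_{i-1}y_{i+1}+W_{i-1,i+2}\,y_i .
\]
Hence $W'_{i-1,i+1}=W_{i-1,i+2}=a'_{i-1}$ as in \eqref{eq:a-red}. The same argument with $i<i+1<i+2<i+3$ gives $W'_{i,i+2}=W_{i,i+3}=a'_i$. All other quiddity entries of $\cF'$ are visibly $a_r$ or $a_{r+1}$. By \cref{cor:unique}, $\cF'$ is therefore the unique closed decorated frieze with data $(\by',\ba')$. \cref{lem:matrix} gives an independent consistency check: the three transfer steps are replaced by two with the same product, so the diagonals agree as in \eqref{eq:shift}.

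\textbf{Step 3 (positivity and normalization).} Every nonzero entry of $\cF'$ is an entry of $\cF$, so it lies in $\cL_m$. Regarding $x$ as the new boundary variable $y'_i$ and renaming, the entries lie in the ring $\cL_{m-1}$ in the remaining diagonal variables and $y'_1,\ldots,y'_{n-1}$.
\begin{itemize}
\item For (N1): the interior monomial entries of $\cF'$ are interior monomial entries of $\cF$ other than $W_{i,i+2}=x$, which has become a boundary entry. So each is some $x_k\neq x$.
\item For (N2): by \cref{lem:skeleton}, each $x_k\ne x$ sits on a diagonal of $\Delta(\cF)$. Such a diagonal is not incident to the ear tip $v_{i+1}$, because the ear tip has no incident diagonals. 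Hence it survives the deletion.
\end{itemize}

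\textbf{Main obstacle.} The hard part is the index bookkeeping in Step 1 near the seam, where $\varphi$ jumps and where entries wrap around the period. There one must verify carefully that each required diamond reduces to an admissible Ptolemy quadruple, or to a boundary identity via \cref{thm:glide}. I would handle this by working throughout with the determinant model $W_{i,j}=\det(u_i,u_j)$ of \cref{prop:det-representation}, together with $u_{i+n}=-u_i$ from \cref{rem:vector-antiperiodic}. In that model Ptolemy holds for all index quadruples, so the wrap-around cases need no separate treatment.
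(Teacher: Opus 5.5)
Your proposal is correct, and it takes a genuinely different route from the paper.

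\textbf{Comparison of routes.} The paper starts from the data $(\by',\ba')$ and generates $\cF'$ by the recurrence \eqref{eq:general-rec}. It then uses \cref{lem:matrix} to show that, along each diagonal, the two reduced transfer steps reproduce the three old ones, so the reduced diagonal is the old one with the strip entry skipped. Closedness, positivity and the diamond rule are then inherited, with the diamonds across the junction only said to be ``encoded by'' the matrix identity.

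You go the other way. You define $\cF'$ as the restriction of $\cF$ to indices $\not\equiv i+1 \pmod n$ and verify (D1)--(D4) from \cref{cor:full-ptolemy} and \cref{thm:glide}. The two new quiddity entries come out as the Ptolemy relations in the quadrilaterals $[v_{i-1},v_i,v_{i+1},v_{i+2}]$ and $[v_i,v_{i+1},v_{i+2},v_{i+3}]$, which also explains where \eqref{eq:a-red} comes from. You then conclude with \cref{cor:unique}.

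Your version handles every diamond of $\cF'$ uniformly, including those straddling the seam and those near the ends of diagonals. The paper covers these only by ``the same argument, with indices shifted''. The paper's version needs only the second-order recurrence and a $2\times2$ matrix identity, mirroring the classical cutting proof, and uses neither the determinant model nor the glide symmetry.

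\textbf{Simplifications to your Step 1.} The obstacle you anticipate does not arise. A diamond of $\cF'$ has $q+1\le p+n-1$, hence $\varphi(q+1)-\varphi(p)\le n$, so \cref{cor:full-ptolemy} applies directly with no wrap-around. The lower boundary check is uniformly $W'_{p,p+n-2}=W_{\varphi(p),\varphi(p-1)+n}=W_{\varphi(p-1),\varphi(p)}=y'_{p-1}$ by \cref{thm:glide}. Your phrase ``unless the skipped index lies between'' has the condition reversed. The exceptional case $\varphi(p)=i+2$ is exactly the one where no skipped index lies strictly between $\varphi(p)$ and $\varphi(p+n-2)$.

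\textbf{A step in Step 3 that needs an argument.} The paper is equally silent here, but the new coefficient ring no longer contains $y_i$ and $y_{i+1}$. ``Renaming $x$ as $y'_i$'' does not show that the surviving entries are free of these two variables. One way to close this:
\begin{enumerate}
\item Start from $\Delta(\cF)$ with the ear removed. Its edges carry $y'_1,\dots,y'_{n-1}$, and its diagonals carry the $x_k\neq x$ by \cref{lem:skeleton}.
\item Every chord of the $(n-1)$-gon is reached by flips. Each flip is a Ptolemy relation among entries of $\cF'$ whose divisor is a nonzero entry by (D3). Hence every entry of $\cF'$ is a rational function of the new variables alone.
\item A Laurent polynomial in all the variables that is a rational function of a subset of them is a Laurent polynomial in that subset. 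Positivity of the coefficients is inherited.
\end{enumerate}

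\textbf{A small point in (N1).} State explicitly that, by \cref{lem:skeleton} together with \cref{thm:glide}, the only positions of $\cF$ carrying $x$ are those with $\{p,q\}\equiv\{i,i+2\} \pmod n$. All of these become boundary entries of $\cF'$.
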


\begin{proof}
Before the junction the boundary and quiddity data are unchanged, so the recurrence gives $f_r'=f_r$. At the junction, the old recurrence passes through the three transfer matrices on the left side of the equation in \cref{lem:matrix}, whereas the reduced recurrence passes through the two matrices on the right. Hence
\[
 \binom{f_i'}{f_{i-1}'}=\binom{f_{i+1}}{f_i}.
\]
Iteration proves the second part of \eqref{eq:shift}. The same argument, with indices shifted, applies to 
the other diagonals and therefore to the entire array.

Thus every reduced entry is a surviving entry of $\cF$, so it remains a Laurent polynomial with nonnegative integral coefficients. The decorated diamond relations away from the junction are inherited from $\cF$, while the relations crossing the junction are encoded by \cref{lem:matrix}. The upper and lower boundary rows close after one fewer interior row, so $\cF'$ is a closed positive Laurent decorated frieze of width $m-1$. Its $(n-1)$-periodicity is then automatic from \cref{thm:periodicity}; it need not be checked separately.

After specializing to $1$, the inverse $V$-deletion is the ordinary removal of the chosen ear from $\Delta(\cF)$. By \cref{lem:skeleton}, the remaining monomial positions are exactly the remaining $m-1$ diagonals, labeled by the variables $\{x_1,\ldots,x_m\}\setminus\{x\}$. Hence $\cF'$ is normalized.
\end{proof}

\begin{remark}\label{rem:CHJ-reduction}
Reduction formulae for quiddity cycles in the general coefficient theory, including a noncommutative version, are known; see \cite[Proposition~6.12]{CHJ2024}. The role of \cref{prop:reduction} here is more specific. It identifies the reduced Laurent-positive array with the inverse $V$-deletion, preserves our normalization, and shows that the algebraic operation is exactly the weighted ear removal needed for the converse classification.
\end{remark}

\section{Proof of the decorated Conway--Coxeter theorem}\label{sec:main}
We first recall the forward construction. Given a weighted triangulation, prescribe the weights $y_i$ on boundary edges and $x_k$ on triangulation diagonals. Ptolemy relations determine the values of all remaining chords. Equivalently, the type $A$ perfect-matching formula gives these values as Laurent polynomials with nonnegative integral coefficients \cite{Propp2020,Nishiyama2022}.

\begin{proposition}\label{prop:forward}
Every $(\bx,\by)$-weighted triangulation $\Delta$ determines a unique normalized positive Laurent decorated frieze $\cF_\Delta$ of width $m=n-3$. Its quiddity sequence is
\[
 a_i=Q_\Delta(v_{i+1}),
\]
where $Q_\Delta$ is given by \eqref{eq:geometric-quiddity}.
\end{proposition}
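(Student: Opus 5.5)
I would prove \cref{prop:forward} by induction on $m$, building the frieze by ear attachment and using the reduction machinery of \cref{sec:reduction} in reverse.

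\textbf{Base case and uniqueness.} Uniqueness is immediate from \cref{cor:unique}, since the boundary data $\by$ and the quiddity $a_i=Q_\Delta(v_{i+1})$ determine the frieze. For $m=0$ the triangle gives $Q_\Delta(v_{i+1})=y_{i-1}$, the weight of the opposite side. The width-$0$ strip with rows $0,y_i,y_{i-1},0$ satisfies (D4) because the diamond reads $y_iy_{i-1}-0=y_{i-1}y_i$. Conditions (D3) and (N1)--(N2) are vacuous.

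\textbf{Inductive step.} For $m\ge1$, choose an ear $[v_i,v_{i+1},v_{i+2}]$ of $\Delta$ whose diagonal $[v_i,v_{i+2}]$ has weight $x$; one exists by \cite[Lemma~1]{BCI}. Removing it gives a weighted triangulation $\Delta'$ of an $(n-1)$-gon with boundary weights $\by'$ as in \cref{sec:reduction}, where $x$ becomes a boundary weight $y'_i$. By induction, $\Delta'$ yields a normalized positive Laurent frieze $\cF'$ with quiddity $\ba'=Q_{\Delta'}$. Define $\ba$ from $\ba'$ by \eqref{eq:attach-left}--\eqref{eq:attach-right}, with $a_i=x$. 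By \cref{prop:ear-attach} this $\ba$ is exactly the geometric quiddity of $\Delta$.

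Next I build $\cF$ by running the recurrence \eqref{eq:general-rec} with data $(\by,\ba)$. This gives entries $W_{i,j}=\det(u_i,u_j)$ for a vector sequence as in \cref{prop:det-representation}, and the diamond relation (D4) holds automatically from the $2\times2$ Plücker/Casoratian structure. \cref{lem:matrix}, read backwards, shows that each diagonal of $\cF$ agrees with a diagonal of $\cF'$ after inserting one extra entry at the junction, as in \eqref{eq:shift}. Since $\cF'$ closes, the lower boundary conditions (D1)--(D2) of $\cF$ follow from those of $\cF'$, with one more row. The closing can be phrased as the monodromy $P_s=-I$ for $\cF$, obtained from the corresponding identity for $\cF'$ via \cref{lem:matrix}.

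\textbf{Main obstacle: the inserted inverse $V$-strip.} The entries of $\cF$ are those of $\cF'$ together with one new inverse $V$-strip. Positivity and Laurentness of that strip are not automatic from the recurrence, since it divides by $y$'s and subtracts. I would identify these entries with the chord values $W_{i+1,j}$ through the new vertex $v_{i+1}$. The Ptolemy relation \eqref{eq:full-ptolemy} on the quadrilateral $v_i,v_{i+1},v_{i+2},v_j$ gives
\[
xW_{i+1,j}=y_iW_{i+2,j}+y_{i+1}W_{i,j}.
\]
The right-hand side is a positive combination of entries of $\cF'$. To get positivity without the division by $x$, I would instead cite the perfect-matching/Ptolemy positivity of \cite{Propp2020,Nishiyama2022}, which is the route indicated before the proposition. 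Alternatively, one can show by induction that every entry of $\cF'$ on a chord with endpoint $v_i$ or $v_{i+2}$ crossing nothing is divisible appropriately; I regard this divisibility as the delicate point and would fall back on the matching formula.

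\textbf{Normalization.} With positivity in hand, (D3) holds because the entries are nonzero positive Laurent polynomials. For (N1)--(N2), the new monomial entries are exactly $a_i=x$ and the surviving $x_k$'s of $\cF'$. To see that no new strip entry is a monomial, specialize to $1$ and use \cref{lem:skeleton}-type reasoning: under $\ev$ the strip entries other than $a_i$ are classical chord values of non-diagonals, which are at least $2$.
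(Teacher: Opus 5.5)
Your proposal is correct, but it runs the paper's argument in the opposite direction.

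\textbf{The two routes.} The paper takes the Ptolemy/perfect-matching frieze as given, importing its existence, closedness and positivity from \cite{Propp2020,Nishiyama2022}. It obtains normalization by specialization. It then proves the quiddity formula by applying the algebraic reduction of \cref{prop:reduction} to that frieze and reattaching the ear with \cref{prop:ear-attach}.

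You instead start from $Q_\Delta$, so the quiddity formula holds by construction. You generate the array by the recurrence and get closedness by reading \cref{lem:matrix} backwards, inheriting $P_s=-I$ from $\cF'$. One small point here: the two windows starting at $i$ and $i+1$ split the triple of transfer matrices. For these you need the conjugation $P_{s+1}=A_{s+n}P_sA_s^{-1}$.

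\textbf{Positivity is not an obstacle.} What remains for you is positivity and normalization, and here you misjudge your own argument. $\cL_m$ consists of \emph{Laurent} polynomials, and $x$ is a unit monomial. Multiplying a nonzero Laurent polynomial with nonnegative coefficients by $x^{-1}$ only shifts exponents, so no divisibility is needed. Your identity $xW_{i+1,j}=y_iW_{i+2,j}+y_{i+1}W_{i,j}$ holds for the determinant values by Pl\"ucker, without assuming closedness. So do its mirror $xW_{k,i+1}=y_{i+1}W_{k,i}+y_iW_{k,i+2}$ (or, once closedness is known, the glide symmetry). These already show that every entry of the inserted strip is a nonzero element of $\cL_m$, since $W_{i+2,j}\neq0$ throughout the relevant range.

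The same identity settles normalization directly. For interior strip entries it gives $\ev(W_{i+1,j})=\ev(W_{i+2,j})+\ev(W_{i,j})\geq2$, so no strip entry is a monomial, and you do not need to appeal to the classical classification.

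\textbf{The fallback.} Your fallback to the matching formula is not harmless as stated. To use it you would first have to identify your recurrence-built array with the matching array, which you have not done. That identification would itself come from the same Ptolemy identity by induction, so the detour buys nothing and should be dropped.

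\textbf{What each route buys.} Completed this way, your induction proves from scratch the existence and positivity that the paper cites. The paper's route, in return, gets for free that $\cF_\Delta$ carries $x_k$ on the chord of the diagonal labeled $x_k$, which the injectivity part of \cref{thm:main} uses. In your construction this also follows, but you should add it to the induction hypothesis: the chord $[v_i,v_{i+2}]$ carries $x$, and all other chords are inherited from $\cF'$.
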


\begin{proof}
The standard Ptolemy/perfect-matching construction gives a closed positive Laurent decorated frieze with the prescribed boundary and diagonal weights. Its $n$-periodicity follows either geometrically from the cyclic labeling or algebraically from \cref{thm:periodicity}. Uniqueness follows from \cref{cor:unique}.

For normalization, specialize all weights to $1$. The result is the ordinary Conway--Coxeter frieze of the underlying triangulation. An interior entry is $1$ exactly at a triangulation diagonal. Hence a Laurent monomial interior entry must be one of the prescribed diagonal weights $x_k$, while each $x_k$ occurs on its own diagonal.

It remains to prove the quiddity formula. We use induction on $n$. The triangle is immediate. For $n>3$, remove an ear. By induction, the reduced quiddity is given by \eqref{eq:geometric-quiddity}. Reattaching the ear changes the three affected values by \cref{prop:ear-attach}. These are exactly the inverse relations to \eqref{eq:a-red}; hence the quiddity of $\cF_\Delta$ is the geometric sequence in \eqref{eq:Q-index}.
\end{proof}

\begin{theorem}[Decorated Conway--Coxeter classification]\label{thm:main}
The map
\[
 \Delta\longmapsto\cF_\Delta
\]
is a bijection from $(\bx,\by)$-weighted triangulations of the labeled convex $n$-gon onto normalized positive Laurent decorated friezes of width $m=n-3$.
\end{theorem}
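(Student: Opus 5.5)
By \cref{prop:forward}, the map $\Delta\mapsto\cF_\Delta$ is well defined. It remains to prove injectivity and surjectivity. We plan to get both from the monomial skeleton (\cref{lem:skeleton}) and an induction on $m$ driven by the frieze reduction of \cref{prop:reduction}.

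\emph{Injectivity.} Suppose $\cF_\Delta=\cF_{\Delta'}$. Specializing to $1$ gives $\ev(\cF_\Delta)=\ev(\cF_{\Delta'})$. By the classical Conway--Coxeter theorem, the underlying unweighted triangulations coincide, and both equal $\Delta(\cF_\Delta)$. The labels of the diagonals are then read off from the frieze. By \cref{prop:forward}, the entry of $\cF_\Delta$ at the chord $[v_i,v_j]$ of each diagonal is its prescribed weight $x_k$, and the same holds for $\Delta'$. Hence the two labelings agree, and $\Delta=\Delta'$. The boundary weights are fixed as $y_i$ in both, so nothing more needs checking.

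\emph{Surjectivity.} Let $\cF$ be a normalized positive Laurent decorated frieze of width $m$. The natural candidate is $\Delta$ equal to $\Delta(\cF)$, with each diagonal $[v_i,v_j]$ labeled by the monomial $W_{i,j}$. By \cref{lem:skeleton}, this is a bijective $(\bx,\by)$-labeling, so $\Delta$ is an $(\bx,\by)$-weighted triangulation. By \cref{cor:unique}, it suffices to show that the quiddity of $\cF$ equals $Q_\Delta(v_{i+1})$ for all $i$, because both friezes have the same boundary $\by$.

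We prove this by induction on $m$. For $m=0$ the frieze has quiddity row equal to the lower boundary, $a_i=W_{i,i+2}=y_{i-1}$. The triangle formula gives $Q(v_{i+1})=\wt([v_i,v_{i+2}])=y_{i-1}$, so the two agree. For $m\ge1$, choose $a_i=x$ monomial by \cref{cor:monomial}. Then $[v_i,v_{i+2}]$ is a diagonal of $\Delta$ labeled $x$, so $v_{i+1}$ is an ear tip. Apply \cref{prop:reduction} to obtain $\cF'$ with data $(\by',\ba')$. Let $\Delta'$ be $\Delta$ with this ear removed, so that $x$ becomes a boundary weight.

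To use the induction hypothesis we must check that $\Delta(\cF')=\Delta'$ with the inherited labels. After specialization, the inverse $V$-deletion is ordinary ear removal. The surviving monomial entries of $\cF'$ are the surviving entries of $\cF$ at the remaining diagonals, by the shifted identities \eqref{eq:shift}. The induction hypothesis then gives $a'_r=Q_{\Delta'}(v'_{r+1})$. Since \eqref{eq:a-red} and \eqref{eq:geo-reduction} are the same invertible relations, \cref{prop:ear-attach} recovers $a_{i-1},a_i,a_{i+1}$ as the geometric quiddity values of $\Delta$, and all other entries are unchanged. Hence $\ba=(Q_\Delta(v_{r+1}))_r$, and so $\cF=\cF_\Delta$.

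One formal point needs care. The reduced frieze lives on relabeled variables: $y'$ includes $x$ as a boundary decoration, and one $x$-variable is removed. We must confirm that the inductive statement applies to this variable set, that is, that the boundary entries remain algebraically independent and distinct from the remaining $x$'s. This holds here because all variables are independent, but the indexing should be made explicit.

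The main obstacle is the compatibility check in the inductive step: the skeleton labeling of $\cF'$ must agree with the labeling $\Delta'$ inherited from $\Delta$. Equivalently, deleting the inverse $V$-strip must send the chord positions of $\cF$ to those of the $(n-1)$-gon compatibly with vertex relabeling. I would handle this by tracking indices through \eqref{eq:shift} on each diagonal.
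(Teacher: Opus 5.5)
Your proof is correct and follows the paper's argument: injectivity comes from specialization to $1$ plus reading off the diagonal labels, and surjectivity is an induction on $m$ via reduction at a monomial quiddity entry (\cref{prop:reduction}), ear reattachment (\cref{prop:ear-attach}), and uniqueness from boundary and quiddity data (\cref{cor:unique}). The only difference is organizational: you fix the skeleton triangulation as the candidate in advance, which forces the compatibility check $\Delta(\cF')=\Delta'$ you flag as the main obstacle (already settled by the last paragraph of the proof of \cref{prop:reduction} together with \eqref{eq:shift}), whereas the paper inducts on surjectivity itself and attaches the ear to whatever $\Delta'$ the hypothesis supplies, so it never needs that identification.
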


\begin{proof}
We prove surjectivity by induction on $m$. The case $m=0$ is a triangle. Let $m\geq1$ and let $\cF$ be normalized. By \cref{thm:periodicity}, $n=m+3$ is a period of $\cF$, so the specialization $\ev(\cF)$ is a classical positive integral closed frieze of width $m$ with an $n$-term quiddity. The classical Conway--Coxeter theorem gives a triangulation $\Delta(\cF)$ of the labeled $n$-gon.

Choose an ear of this triangulation. By \cref{cor:monomial}, after cyclic relabeling its quiddity entry is $a_i=x$ for a distinguished variable $x$. By \cref{prop:reduction}, deleting the corresponding inverse $V$-strip gives a normalized positive Laurent decorated frieze $\cF'$ of width $m-1$. Its period $n-1=m+2$ is supplied automatically by (DCC1). By induction, $\cF'$ comes from a unique weighted triangulation $\Delta'$ of an $(n-1)$-gon.

In the reduced boundary sequence, the new edge has weight $x$. Attach to it an ear with boundary weights $y_i$ and $y_{i+1}$. By \cref{prop:ear-attach}, the quiddity changes by the inverse of \eqref{eq:a-red}. The resulting weighted triangulation therefore has the same boundary and quiddity data as $\cF$. By \cref{cor:unique}, it produces $\cF$.

For injectivity, suppose $\cF_\Delta=\cF_{\widetilde\Delta}$. Specializing to $1$, the classical Conway--Coxeter theorem gives the same underlying labeled triangulation. By \cref{lem:skeleton}, the decorated frieze records the label $x_k$ on every triangulation diagonal. Hence the diagonal labeling also agrees, and $\Delta=\widetilde\Delta$.
\end{proof}

\begin{corollary}[Classical Conway--Coxeter theorem]\label{cor:classical}
Under the specialization
\[
 x_1=\cdots=x_m=y_1=\cdots=y_n=1,
\]
\cref{thm:periodicity,thm:main} reduce respectively to the classical $n$-periodicity property (CC1) and the Conway--Coxeter triangulation correspondence (CC2).
\end{corollary}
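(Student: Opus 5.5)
The plan is to check, under the specialization $\ev$, that each ingredient of \cref{thm:periodicity} and \cref{thm:main} becomes exactly the classical object. I would verify the periodicity statement first and then the classification.

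For (CC1), take any classical positive integral closed frieze of width $m$. Regard it as a closed decorated frieze in which every $y_i$ equals $1$. The diamond rule \eqref{eq:diamond} becomes $ad-bc=1$, and positivity guarantees (D3). The proof of \cref{thm:periodicity} uses only (D1)--(D4) and the fact that each $y_i$ is a unit. It therefore applies verbatim over $\Z$, or over $\mathbb Q$ if one prefers, since the coefficient ring plays no special role there. This gives $n$-periodicity. By \cref{rem:period-specialization}, only ``$n$ is a period'' survives specialization, not minimality, and this is exactly (CC1).

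For (CC2), I would apply $\ev$ to both sides of the bijection $\Delta\mapsto\cF_\Delta$ and read the result as follows.
\begin{itemize}
\item On the triangulation side, forgetting the weights sends $(\bx,\by)$-weighted triangulations onto plain triangulations of $P_n$. Two weighted triangulations with the same underlying triangulation differ only by a permutation of the labels $x_k$.
\item On the frieze side, $\ev(\cF_\Delta)$ is the classical frieze of the underlying triangulation. This is because \eqref{eq:geometric-quiddity} specializes to the number of triangles at $v_{i+1}$, as noted after \cref{def:Q}, and \cref{cor:unique} (with all $y_i=1$) shows that the quiddity determines the frieze.
\item The map $\Delta\mapsto\ev(\cF_\Delta)$ is therefore constant on relabeling orbits, so it descends to a map from plain triangulations to classical friezes.
\end{itemize}
Taking the quotient of \cref{thm:main} by relabelings then yields the correspondence, with its quiddity rule recovered from \cref{def:Q}.

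The main obstacle is that the proof of \cref{thm:main} itself invokes the classical Conway--Coxeter theorem, in \cref{lem:skeleton} and in the construction of $\Delta(\cF)$. A corollary deriving (CC2) from it would then be circular, so I would argue it independently. I would show directly that every classical positive integral frieze arises from some triangulation, by rerunning the induction of \cref{thm:main} with all variables equal to $1$. This needs three inputs.
\begin{enumerate}
\item An entry $1$ in the quiddity, which is the classical positivity fact cited after \cref{cor:monomial}.
\item The reduction \eqref{eq:a-red} with $x=y=1$, which becomes subtraction of $1$ from the two neighbours. \cref{lem:matrix} holds verbatim in this case.
\item The reduced array is again positive. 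Here \cref{prop:reduction} shows that its entries are surviving entries of the original frieze.
\end{enumerate}
Injectivity comes from the forward map together with \cref{cor:unique}, plus the fact that the triangles-count quiddity determines the triangulation; that last fact follows by the same ear induction. The corollary can then honestly be stated as a specialization.
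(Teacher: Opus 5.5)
Your argument is correct, and it does more than the paper does. The paper gives no separate proof of \cref{cor:classical}. It treats the corollary as immediate from setting all variables to $1$: \eqref{eq:diamond} becomes $ad-bc=1$ and \eqref{eq:geometric-quiddity} becomes the triangle count.

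Read as a derivation, that is circular for (CC2), as you say. The proof of \cref{thm:main} depends on the classical theorem at several points:
\begin{itemize}
\item the construction of $\Delta(\cF)$;
\item \cref{lem:skeleton}, and through it \cref{cor:monomial};
\item the normalization clause of \cref{prop:reduction};
\item the injectivity step of \cref{thm:main}.
\end{itemize}
Even for (CC1), specializing decorated friezes only reaches classical friezes already known to lift.

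Your route instead runs the algebra directly over $\Z$ with all $y_i=1$.
\begin{itemize}
\item For (CC1) this is legitimate. \cref{prop:recurrence} and \cref{thm:periodicity} use only (D1)--(D4), invertibility of the $y_i$, and cancellation in an integral domain.
\item For (CC2) you rerun the ear-cutting induction using \cref{lem:matrix} and only the closure/positivity part of \cref{prop:reduction}. Keep it explicit that you do not need its normalization clause, since that rests on \cref{lem:skeleton}.
\item The forward map comes from \cref{prop:forward}, whose existence part is cited from the Ptolemy/perfect-matching construction, so this is not circular. Injectivity comes from the ear induction showing that triangle counts determine the triangulation.
\end{itemize}

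What each buys: the paper's one-line reading is a consistency check that the decorated theory specializes correctly. Yours is a genuinely non-circular derivation, in effect the classical Conway--Coxeter cutting proof as the $x=y=1$ shadow of the decorated induction (cf.\ \cref{rem:classical-proof}).

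Two small points.
\begin{itemize}
\item Your sentence that ``taking the quotient of \cref{thm:main} by relabelings yields the correspondence'' does not by itself give a bijection onto all classical friezes. It says nothing about injectivity of $\ev$ on relabeling orbits, nor about surjectivity. So your bullets only construct the forward map, and all the weight rests on the rerun induction.
\item To make that induction visibly independent of (CC2), prove the existence of a quiddity entry $1$ rather than citing it. If all $a_j\ge2$, then along a diagonal $f_{r+1}-f_r=(a-2)f_r+(f_r-f_{r-1})\ge f_r-f_{r-1}\ge\cdots\ge f_0-f_{-1}=1$. This contradicts $f_{n-2}=1$ and $f_{n-1}=0$.
\end{itemize}
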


\begin{remark}
%[Comparison of the two inductions]
\label{rem:classical-proof}
The proof of \cref{thm:main} is deliberately arranged to mirror the classical Conway--Coxeter induction. In the classical case one finds a quiddity entry $1$, deletes it, decreases its two neighbors by $1$, and removes the corresponding ear of the triangulated polygon. In the decorated case, specialization to $1$ finds the classical ear, Laurent positivity lifts its quiddity entry from $1$ to a monomial $x_k$, and the neighboring entries are modified by \eqref{eq:geo-reduction}. The matrix identity in \cref{lem:matrix} is exactly the decorated replacement for the elementary matrix identity in the classical cutting proof.
\end{remark}

\begin{remark}\label{rem:conceptual}
Once \cref{lem:skeleton} is known, there is also a short conceptual reconstruction: specialization to $1$ recovers the triangulation, normalization recovers the $x_k$-label on each diagonal, and Ptolemy propagation then recovers the entire decorated frieze. We retain the inductive proof because it exhibits the precise algebraic/geometric reduction mechanism and proves the explicit geometric quiddity formula at the same time.
\end{remark}


\begin{thebibliography}{99}

\bibitem{BCI}
D.~Broline, D.~W.~Crowe and I.~M.~Isaacs,
\emph{The geometry of frieze patterns},
Geom. Dedicata \textbf{3} (1974), 171--176.

\bibitem{CH2021}
M.~Cuntz and T.~Holm,
\emph{Subpolygons in Conway--Coxeter frieze patterns},
Algebraic Combinatorics \textbf{4} (2021), no.~4, 741--755.

\bibitem{CHJ2020}
M.~Cuntz, T.~Holm and P.~J\o rgensen,
\emph{Frieze patterns with coefficients},
Forum Math. Sigma \textbf{8} (2020), Paper No.~e17.

\bibitem{CHJ2024}
M.~Cuntz, T.~Holm and P.~J\o rgensen,
\emph{Noncommutative frieze patterns with coefficients},
arXiv:2403.09156.

\bibitem{CC1}
J.~H.~Conway and H.~S.~M.~Coxeter,
\emph{Triangulated polygons and frieze patterns},
Math. Gaz. \textbf{57} (1973), 87--94.

\bibitem{CC2}
J.~H.~Conway and H.~S.~M.~Coxeter,
\emph{Triangulated polygons and frieze patterns (continued)},
Math. Gaz. \textbf{57} (1973), 175--183.

\bibitem{Coxeter1971}
H.~S.~M.~Coxeter,
\emph{Frieze patterns},
Acta Arith. \textbf{18} (1971), 297--310.

\bibitem{Henry2013}
C.-S.~Henry,
\emph{Coxeter friezes and triangulations of polygons},
Amer. Math. Monthly \textbf{120} (2013), no.~6, 553--558.

\bibitem{Morier2Friezes}
S.~Morier-Genoud,
\emph{Arithmetics of $2$-friezes},
J. Algebr. Comb. \textbf{36} (2012), 515--539.

\bibitem{MorierSurvey}
S.~Morier-Genoud,
\emph{Coxeter's frieze patterns at the crossroads of algebra, geometry and combinatorics},
Bull. Lond. Math. Soc. \textbf{47} (2015), 895--938.

\bibitem{MOT2015}
S.~Morier-Genoud, V.~Ovsienko and S.~Tabachnikov,
\emph{$SL_2(\mathbb Z)$-tilings of the torus, Coxeter--Conway friezes and Farey triangulations},
Enseign. Math. \textbf{61} (2015), 71--92.

\bibitem{Nishiyama2022}
K.~Nishiyama,
%\emph{Fr\=izu no s\=ugaku: Sukecchich\=o -- kazu to kika no kirameki}
\emph{A Sketchbook of the Mathematics of Friezes: the Brilliance of Numbers and Geometry},
Kyoritsu Shuppan, Tokyo, 2022 (in Japanese).

\bibitem{Propp2020}
J.~Propp,
\emph{The combinatorics of frieze patterns and Markoff numbers},
Integers \textbf{20} (2020), Paper No.~A12.

\bibitem{Short2025}
I.~Short, M.~van Son and A.~Zabolotskii,
\emph{Frieze patterns and Farey complexes},
Adv. Math. \textbf{472} (2025), Paper No.~110269.

\end{thebibliography}
\end{document}